\renewcommand\labelenumi{(\roman{enumi})}
\renewcommand\theenumi\labelenumi
\renewcommand\labelenumii{\alph{enumii}.}
\renewcommand\theenumii\labelenumii
\newtheorem{nummer}{ }
\newtheorem*{theorem*}{Theorem}
\newtheorem{thm}[nummer]{Theorem}
\newtheorem{prop}[nummer]{Proposition}
\newtheorem{lem}[nummer]{Lemma}
\newtheorem{facts}[nummer]{Facts}
\newtheorem{cor}[nummer]{Corollary}
\theoremstyle{definition}
\newtheorem{rem}[nummer]{Remark}
\newtheorem{defi}[nummer]{Definition}
\newtheorem*{exa}{Example}
\DeclareRobustCommand*{\sagelogo}{%
  \begin{tikzpicture}[line width=.2ex,line cap=round,rounded corners=.01ex,baseline=-.2ex]
    \draw(0,0) -- (.75em,0) 
      -- (.75em,.7ex) -- (.25em,.7ex) 
      -- (.25em,.75\ht\strutbox) -- (.75em,.75\ht\strutbox);
    \draw(2em,0) -- (1.6em,0)
      -- (1.3em,.75\ht\strutbox) -- (.9em,.75\ht\strutbox)
      -- (.9em,0) -- (1.3em,0)
      -- (1.6em,.75\ht\strutbox) -- (2.1em,.75\ht\strutbox)
      -- (2.1em,-\dp\strutbox) -- (1.45em,-\dp\strutbox);
    \draw(3em,0) -- (2.25em,0)
      -- (2.25em,.75\ht\strutbox) -- (2.8em,.75\ht\strutbox)
      -- (2.8em,.7ex) -- (2.35em, .7ex);
  \end{tikzpicture}%
}
\let\phi\varphi
\let\bar\overline
\newcommand*{\ZZ}{\mathbb{Z}}
\newcommand*{\miqplane}{\mathbb{M}}
\DeclareMathOperator{\ord}{ord}
\DeclareMathOperator{\capacfunc}{cap}
\newcommand*{\capac}{\ensuremath\kappa}
\definecolor{carrier}{rgb}{1,0,0}
\definecolor{chain}{rgb}{0,0,1}
\title{Exotic Steiner Chains in Miquelian M\"obius Planes}
\author[1]{Norbert Hungerb\"uler}
\author[2]{Gideon Villiger}
\affil[1]{Department of Mathematics, ETH Z\"urich}
\affil[2]{Institute of Mathematics, University of Z\"urich}
\begin{document}
\maketitle
\begin{abstract}
\parskip=2mm
\parindent=0mm
\noindent
We investigate Steiner chains of circles in finite Miquelian M\"obius planes of odd order
. 
In the Euclidean
plane, two intersecting circles or two circles which are tangent to each other
clearly do not carry a finite Steiner chain. However, in this paper we will show
that such exotic Steiner chains exist in finite Miquelian M\"obius planes of odd order.
We state and prove explicit conditions in terms of the order of the plane and  
the capacitance of the two carrier circles 
$C_1$ and $C_2$ for the existence, length, and number of Steiner chains carried 
by $C_1$ and $C_2$. 

{\bf Keywords:} Finite M\"obius planes, Steiner's Theorem, Steiner chains, capacitance. 

{\bf 2010 Mathematics Subject Classification:} 05B25, 51E30, 51B10
\end{abstract}

\section{Introduction}
When Jakob Steiner was still a pupil in Yverdon's Pestalozzi school, he found his famous
theorem in circle geometry\footnote{``Found on Saturday Dec.~10th, 1814, after 
$3 + 3 + 4$ hours of efforts, at 1 o'clock in the night." From Steiner's notes during his first month in Yverdon.}:
\begin{theorem*}[Steiner's porism]
Let $C_1,C_2$ be two disjoint M\"obius circles (circles or straight lines) in the Euclidean 
plane. Consider a sequence of different M\"obius circles $M_1,M_2,\ldots, M_k$ which
are tangent to both $C_1$ and $C_2$. Moreover, let $M_i$ and $M_{i+1}$ be tangent
for $i=1,\ldots, k-1$. Then the following is true: If $M_1$ and $M_k$ are tangent, then
there are infinitely many such chains: Every point of $C_1$ and $C_2$ belongs to a circle of such
a chain. And every chain of consecutive tangent circles closes after exactly $k$ steps.
\end{theorem*}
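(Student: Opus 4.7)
The plan is to reduce Steiner's porism to a highly symmetric configuration by means of a M\"obius transformation. Since $C_1$ and $C_2$ are disjoint M\"obius circles they lie in a coaxial pencil which, precisely because they do not meet, possesses two real limit points (the point-circles of the pencil). Inverting in any circle centred at one of these limit points produces a M\"obius transformation $\phi$ that sends the other limit point to a regular point, and simultaneously maps $C_1$ and $C_2$ to two concentric Euclidean circles.

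Next I would invoke the two structural properties of M\"obius transformations that do the real work: they carry M\"obius circles to M\"obius circles, and they preserve tangency. Consequently the image chain $\phi(M_1),\ldots,\phi(M_k)$ consists of M\"obius circles which are pairwise tangent in the same pattern as the original chain and each tangent to both concentric images of $C_1,C_2$. It therefore suffices to establish the porism in the concentric case and pull the conclusion back by $\phi^{-1}$.

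The concentric case is then elementary Euclidean geometry. Let the images of $C_1,C_2$ have radii $0<r<R$ around a common centre $O$. Every circle tangent to both is a Euclidean circle of radius $\rho=(R-r)/2$ whose centre lies on the circle of radius $(R+r)/2$ around $O$, and two such circles are tangent to each other iff their centres subtend at $O$ a fixed angle $2\alpha$ determined by $\sin\alpha=(R-r)/(R+r)$. A chain of $k$ pairwise distinct circles therefore closes precisely when $2k\alpha=2\pi n$ with $\gcd(n,k)=1$, a condition depending only on the ratio $R/r$ and not on the starting circle. Hence, once one chain closes after $k$ steps, every chain of consecutively tangent circles closes after exactly $k$ steps, and by rotating around $O$ every point of either concentric circle is covered by some such chain.

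The main obstacle I expect is the clean execution of the first step, in particular when one of $C_1,C_2$ happens to be a straight line: one must verify that the limit points of the pencil still exist in that situation, perform the inversion on the one-point compactification, and interpret ``concentric'' in the inversive sense (one of the two centres may be $\infty$). Once this is in place, the remainder is essentially a computation of an angle in an annulus together with the M\"obius invariance of tangency.
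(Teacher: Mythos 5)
The paper does not actually prove this statement: Steiner's porism is quoted in the introduction as a classical result, with references to Coolidge, Pedoe and Coxeter, and the authors' own work only concerns its finite-plane analogues. So there is no proof in the paper to compare against; your proposal must stand on its own. It does, essentially: the reduction via the limit points of the coaxial pencil to two concentric circles, followed by the rotational-symmetry argument in the annulus, is the standard and correct route, and your handling of the line case (disjointness as M\"obius circles rules out two parallel lines, and a line disjoint from a circle still has a hyperbolic pencil with two limit points) is the right thing to check. One genuine imprecision: it is not true that \emph{every} circle tangent to both concentric circles has radius $(R-r)/2$ with centre on the circle of radius $(R+r)/2$. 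There is a second family, of radius $(R+r)/2$ with centres on the circle of radius $(R-r)/2$, consisting of circles that enclose the inner carrier and are internally tangent to the outer one. To complete the argument you should observe that no two members of this second family are tangent to each other (equal radii, centres too close for external tangency), and that a tangency between a circle of one family and one of the other can only occur at a point lying on a carrier circle, i.e.\ only in the degenerate configurations excluded from a proper chain. With that, every admissible chain lives entirely in the first family, each step advances the centre by a fixed angle $2\alpha$ with $\sin\alpha=(R-r)/(R+r)$ (all steps in the same direction, since a reversal would repeat a circle), and the closing condition $k\alpha\in\pi\ZZ$ depends only on $R/r$, which is exactly the porism.
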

\begin{figure}[h!]
\begin{center}
\psset{unit=9.0,algebraic=true,dimen=middle,dotstyle=o,dotsize=5pt 0,linewidth=0.6pt,arrowsize=3pt 2,arrowinset=0.25}
\begin{pspicture*}(1.2630081621235665,-0.25244579963318936)(1.7706903111645296,0.2539108085486618)
\rput[tl](1.3,.19){\small{$\red{C_2}$}}
\rput[tl](1.62,0.016){\small{$\red{C_1}$}}
\pscircle[linecolor=carrier,linewidth=1pt](1.5214157397027037,0.){0.2392921301486482}
\pscircle[linecolor=carrier,linewidth=1pt](1.6397114703205506,0.){0.02205697204739704}
\pscircle[linecolor=chain](1.564945776296287,0.11568675374080538){0.1156867537408055}
\pscircle[linecolor=chain](1.4498890539136045,0.){0.1677654443595491}
\pscircle[linecolor=chain](1.564945776296287,-0.11568675374080538){0.1156867537408055}
\pscircle[linecolor=chain](1.669967066011676,-0.08498222164502009){0.06815045040967235}
\pscircle[linecolor=chain](1.7072841808675658,-0.028446832540566735){0.051259421018968104}
\pscircle[linecolor=chain](1.7072841808675658,0.028446832540566735){0.051259421018968104}
\pscircle[linecolor=chain](1.669967066011676,0.08498222164502009){0.06815045040967235}
\end{pspicture*}
\psset{unit=7.0cm,algebraic=true,dimen=middle,dotstyle=o,dotsize=5pt 0,linewidth=0.6pt,arrowsize=3pt 2,arrowinset=0.25}
\begin{pspicture*}(0.3026803817200881,-0.12300131820504885)(0.9614170378570972,0.5263471670606514)
\rput[tl](.4,.14){\small{$\red{C_2}$}}
\rput[tl](.825,0.29){\small{$\red{C_1}$}}
\pscircle[linecolor=carrier,linewidth=1pt](0.4817817505932379,0.07048567069821618){0.1097438105343636}
\pscircle[linecolor=carrier,linewidth=1pt](0.768923325749653,0.11249507949238614){0.15236914545244162}
\pscircle[linecolor=chain](0.6363757362608823,0.19759302931943953){0.3098827148490222}
\pscircle[linecolor=chain](0.5486707790689813,0.32600070288085553){0.15438129959665747}
\pscircle[linecolor=chain](0.5933012775658847,0.1499805155043452){0.027208864674824647}
\pscircle[linecolor=chain](0.6011805371845318,0.10807405416531134){0.015431893222965894}
\pscircle[linecolor=chain](0.605648953873708,0.07859057709441022){0.014388270557577968}
\pscircle[linecolor=chain](0.6098087925588451,0.04331129816339502){0.021135408584351242}
\pscircle[linecolor=chain](0.6158644217801493,-0.044518998343871505){0.06690339893721997}
\end{pspicture*}
\psset{unit=1.3cm,algebraic=true,dimen=middle,dotstyle=o,dotsize=5pt 0,linewidth=0.6pt,arrowsize=3pt 2,arrowinset=0.25}
\begin{pspicture*}(-0.9259672057956748,-1.865231186361396)(2.680199674355029,1.740935693789325)
\rput[tl](.39,.1){\small{$\red{C_2}$}}
\rput[tl](2.25,0){\small{$\red{C_1}$}}
\pscircle[linecolor=carrier,linewidth=1pt](0.5667271761044993,0.25984680376355396){0.1193201844053403}
\pscircle[linecolor=carrier,linewidth=1pt](1.5940781466744594,0.7308915626913759){0.9627026511386289}
\pscircle[linecolor=chain](0.44252648542481166,-0.8269387405543815){0.9745393270968191}
\pscircle[linecolor=chain](-0.09549700308597153,0.7881894165954275){0.7278437784958822}
\pscircle[linecolor=chain](0.6121223869523671,0.43599096980871976){0.06257949233892807}
\pscircle[linecolor=chain](0.6715261139986257,0.36557509583484676){0.029546497992198106}
\pscircle[linecolor=chain](0.6975184835419721,0.31866127350484413){0.02408660745666557}
\pscircle[linecolor=chain](0.7159447985457202,0.2676996665189162){0.030103931119782774}
\pscircle[linecolor=chain](0.7301304700275384,0.17285338441957607){0.06579732280037028}
\end{pspicture*}
\end{center}
\caption{Left: Steiner chain wrapping around twice. 
Middle and right: carrier circles which are not nested.}\label{fig-steiner-ketten}
\end{figure}
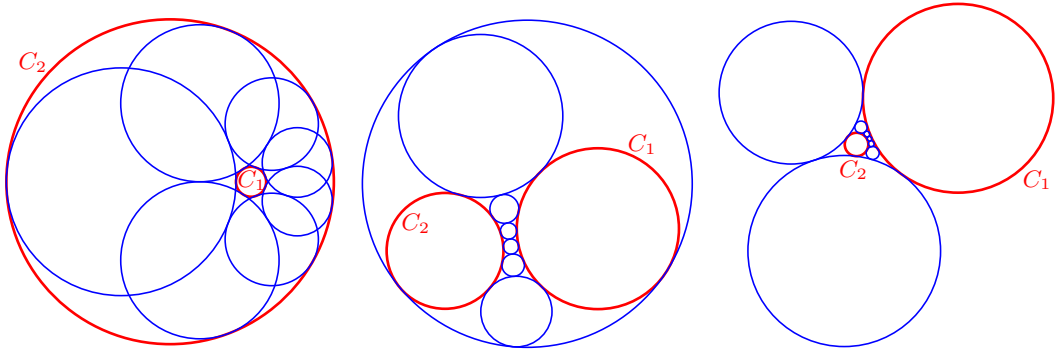
In the sequel Steiner  investigated the geometric properties of such  chains.   
For example,  he  proved  that  the tangent
points of  the circles  $M_1,\ldots,M_k$ lie  on a
circle and that their centers lie on a conic whose foci are  the centers of the
carrier circles $C_1$ and $C_2$.  He also stated the conditions for such a
chain to close after $k$ steps in  terms of the radii and the distance
between the centers of $C_1$ and $C_2$.  The interested reader will find
more information about the classical theory of Steiner chains and
generalizations in~\cite{MR0389515},  
\cite{MR1017034}, \cite{MR990644}, \cite{MR2877262}, or~\cite{MR3193739}.

Throughout this paper, $p$ will denote an odd prime number, $m\ge 1$ a natural number, and $q=p^m$.
It is known that a version of Steiner's porism holds in a finite
Miquelian M\"obius plane $\mathbb M(q)$. However, unlike in the 
Euclidean plane, a pair of circles $C_1, C_2$ in such a finite plane 
may or may not have a common tangent circle. 
If we fix a pair of disjoint circles and choose a point $P$ on one of them,
then the following is true: if $q\equiv -1\mod 4$ and the given pair of 
disjoint circles admits a common tangent circle, then the pair will carry precisely one 
Steiner chain such that $P$ is a point of one of its circles. 
If $q\equiv 1\mod 4$ and if the given pair of disjoint circles 
admits a common tangent circle, then there exists either no Steiner chain, or precisely 
two of them, each with a circle containing $P$. The full
statement is the following theorem (see Section~\ref{sec-preliminaries} for definitions):
\begin{theorem*}[Theorem 5.5 in~\cite{nhkk}]
Let  $C_1$ and $C_2$ be disjoint circles in the Miquelian M\"obius plane $\mathbb M(q)$,
$c:=\operatorname{cap}(C_1,C_2)$ their capacitance, and $P$ an arbitrary point on $C_1$ or $C_2$. Then
$b:=\frac12(c-2+\sqrt{c(c-4)})\in GF(q)\setminus\{0\}$. If $b$ is a nonsquare in $GF(q)$, then
$C_1$ and $C_2$ have no common tangent circles and hence they do not carry a Steiner chain.
If, on the other hand, $b = \mu^2$, for $\mu=\mu_1$ and $\mu=\mu_2=-\mu_1\neq \mu_1\in GF(q)$, then
for each $j\in\{1,2\}$ satisfying the following conditions there is a separate Steiner chain of length $k\ge 3$
carried by $C_1$ and $C_2$ such that $P$ belongs to one of its circles: 
\begin{enumerate}
\item $-\mu_j $ is a non-square in $GF(q)$,
\item $\mu_j $ solves $ \xi^k=1$ for $\xi$ given by 
      \begin{align}
      \xi = \frac{-\mu_j ^2+6\mu_j -1 + 4(\mu_j -1)\sqrt{-\mu_j }}{(1+\mu_j )^2}
      \end{align}
      but $\xi^l \neq 1$ for all $1 \leq l \leq k-1$.
\end{enumerate}
\end{theorem*}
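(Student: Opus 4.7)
The plan is to exploit the Möbius-invariance of the capacitance to normalize the pair $(C_1,C_2)$ into a canonical position, then parameterize the common tangent circles algebraically and track a Steiner chain as an orbit of a single Möbius transformation acting on $C_1$. Since $\mathrm{PGL}$ is $3$-transitive on the point set of $\mathbb{M}(q)$, I would first transport $C_1$ to a standard circle (say, the ``$x$-axis'' together with $\infty$) and $C_2$ into a parametrically simple form whose radius-parameter encodes $c$. In these coordinates, the definition of $b$ yields the relation $b+b^{-1}=c-2$, so $b$ satisfies a quadratic whose discriminant is $c(c-4)$; hence $b\in GF(q)\setminus\{0\}$ exactly as claimed, and $b$ has the geometric meaning of the ``squared ratio'' between the two carriers in normalized form.

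Second, I would write down the locus of circles tangent to both $C_1$ and $C_2$. Translating ``tangency'' into the quadratic incidence relations of $\mathbb{M}(q)$, one obtains a defining equation for a tangent circle whose solvability hinges on whether $b$ admits a square root in $GF(q)$. When $b=\mu^2$, the two roots $\mu$ and $-\mu$ give the two families of common tangent circles; their parameterization is governed by $\mu_j$. This step is where condition (i) enters: among the roots of the relevant auxiliary quadratic, the tangency is realized only when $-\mu_j$ is a non-square, because that is precisely the configuration in which the tangent point lies in $GF(q)$ rather than in a quadratic extension.

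Third, I would realize the Steiner chain dynamically. Given one common tangent $M_i$, the next chain member $M_{i+1}$ (tangent to $C_1$, $C_2$, and $M_i$) is uniquely determined, and the induced map on $C_1$ sending the tangent point of $M_i$ to that of $M_{i+1}$ is a Möbius transformation $T_{\mu_j}$ whose conjugacy data is entirely determined by $\mu_j$. A direct computation at the fixed points of $T_{\mu_j}$ gives its multiplier; this is the quantity
\[
\xi \;=\; \frac{-\mu_j^2+6\mu_j-1+4(\mu_j-1)\sqrt{-\mu_j}}{(1+\mu_j)^2},
\]
and the chain closes at step $k$ precisely when $\xi$ has multiplicative order $k$ in $GF(q)$ (or its quadratic extension, depending on whether $-\mu_j$ is a square). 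Each choice of sign $j\in\{1,2\}$ yields a distinct chain, and the conditions (i)–(ii) ensure both that the chain lives over $GF(q)$ and that $k$ is minimal.

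The principal obstacle is the explicit derivation of the multiplier formula for $\xi$: it requires carrying the coordinate normalization all the way through the tangency computation, keeping the two branches of $\sqrt{b}$ consistent, and then simplifying the eigenvalue ratio of $T_{\mu_j}$ at its fixed points into the closed form above. Once this is done, the existence/number statements (``one chain when $q\equiv -1\bmod 4$, zero or two when $q\equiv 1\bmod 4$'') follow from the quadratic-residue behavior of $-1$ in $GF(q)$ applied to condition (i), since $\mu_1$ and $\mu_2=-\mu_1$ simultaneously satisfy or simultaneously fail (i) depending on whether $-1$ is a square.
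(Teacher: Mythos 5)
A preliminary remark: this theorem is not proved in the present paper at all --- it is quoted as background from~\cite{nhkk} (Theorem 5.5 there), and the present paper only reproduces its statement. So your proposal can only be measured against the strategy of that reference, which the present paper mirrors in its Sections~\ref{sec-tangent} and~\ref{sec-intersecting} for the tangent and intersecting cases. Your architecture is indeed that strategy: normalize by a M\"obius transformation using invariance of the capacitance, parameterize the common tangent circles, and realize the chain as the orbit of a single transformation whose multiplier is $\xi$. Your relation $b+b^{-1}=c-2$ is exactly what drops out of normalizing to the concentric pair $\operatorname{N}(z)=1$, $\operatorname{N}(z)=b$, since then $c=(1+b)^2/b$; and your closing counting argument is correct, because $(-\mu_1)(-\mu_2)=-\mu_1^2=-b$ with $b$ a nonzero square, so the two instances of condition (i) agree exactly when $-1$ is a square.

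However, as a proof the proposal has genuine gaps. First, the assertion $b\in GF(q)\setminus\{0\}$ requires showing that $c(c-4)$ is a square in $GF(q)$ for every disjoint pair, equivalently that every disjoint pair can actually be brought to concentric position over $GF(q)$; this is a nontrivial lemma (a discriminant computation via Lemma~\ref{prop:inters_circles_first_type}, or the construction of a common ``inversion pair'' of points), and without it the definition of $b$ is not even known to make sense. Second, the derivation of the multiplier $\xi$ --- which you yourself flag as ``the principal obstacle'' --- together with the verification that condition (i) is precisely the solvability condition for adjacent tangency, is the technical heart of the theorem and is left entirely undone. On that point your stated mechanism is also off: the reason $-\mu_j$ must be a nonsquare is not that ``the tangent point lies in $GF(q)$'' (contact points generically live in $GF(q^2)$ in any case); rather, exactly as in Lemma~\ref{lem:condition_for_chain} of the present paper, the center of the neighbouring tangent circle is forced to satisfy an equation of the shape $(\text{center difference})^2=-\mu_j\cdot(\text{nonzero square})$ together with a conjugation constraint of the form $\bar{c}=-c$, and this system is solvable precisely when $-\mu_j$ is a nonsquare in $GF(q)$. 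In sum: the skeleton is right and matches the known proof, but the two computations that carry the actual content are missing.
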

Now, in the Euclidean plane finite Steiner chains cannot exist
if $C_1$ and $C_2$ intersect or are tangent to each other. In the
latter case, the situation corresponds to a Pappus chain (see Figure~\ref{fig-pappus}).
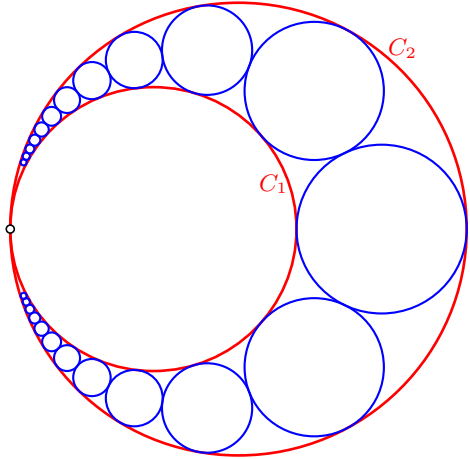
\begin{figure}
\begin{center}
\psset{unit=.7cm,algebraic=true,dimen=middle,dotstyle=o,dotsize=5pt 0,linewidth=.6pt,arrowsize=3pt 2,arrowinset=0.25}
\psset{unit=1.0cm,algebraic=true,dimen=middle,dotstyle=o,dotsize=5pt 0,linewidth=0.8pt,arrowsize=3pt 2,arrowinset=0.25}

\begin{pspicture*}(-3.1,-2.1)(3.1,4.1)
\pscircle[linewidth=0.6pt,linecolor=carrier,linewidth=1pt](0.,1.){3.0014115746490155}
\rput[tl](1.9666283531868813,3.52){\small{$\red{C_2}$}}
\rput[tl](0.27,1.7261914686802853){\small{$\red{C_1}$}}
\pscircle[linecolor=carrier,linewidth=1pt](-1.119109612832595,1.){1.88230196181642}
\pscircle[linecolor=chain](1.8823019618164198,1.){1.1191096128325952}
\pscircle[linecolor=chain](0.9961209996049463,2.8320800751539745){0.9160400375769872}
\pscircle[linecolor=chain](-0.4129549046327818,3.3725934948365843){0.5931483737091462}
\pscircle[linecolor=chain](-1.3708644231038893,3.2418526121722){0.3736421020287006}
\pscircle[linecolor=chain](-1.9273389362870093,2.96900167136558){0.24612520892069814}
\pscircle[linecolor=chain](-2.254900345268568,2.7106406562326235){0.17106406562326193}
\pscircle[linecolor=chain](-2.457601594984016,2.4953775759039156){0.12461479799199293}
\pscircle[linecolor=chain](-2.589715257040065,2.3207708118612898){0.09434077227580545}
\pscircle[linecolor=chain](-2.6798533047956434,2.1789682525537293){0.07368551578460839}
\pscircle[linecolor=chain](-2.7437806677471483,2.062656521892548){0.05903647343847612}
\pscircle[linecolor=chain](-2.790617735647247,1.9660739099103695){0.048303695495519196}
\pscircle[linecolor=chain](-2.82588692960564,1.8848776559042864){0.04022171163201156}
\pscircle[linecolor=chain](0.9961209996049463,-0.8320800751539745){0.9160400375769872}
\pscircle[linecolor=chain](-0.4129549046327818,-1.3725934948365843){0.5931483737091462}
\pscircle[linecolor=chain](-1.3708644231038893,-1.2418526121722002){0.3736421020287006}
\pscircle[linecolor=chain](-1.9273389362870093,-0.96900167136558){0.24612520892069814}
\pscircle[linecolor=chain](-2.254900345268568,-0.7106406562326235){0.17106406562326193}
\pscircle[linecolor=chain](-2.457601594984016,-0.4953775759039156){0.12461479799199293}
\pscircle[linecolor=chain](-2.589715257040065,-0.32077081186128975){0.09434077227580545}
\pscircle[linecolor=chain](-2.6798533047956434,-0.17896825255372928){0.07368551578460839}
\pscircle[linecolor=chain](-2.7437806677471483,-0.06265652189254789){0.05903647343847612}
\pscircle[linecolor=chain](-2.790617735647247,0.033926090089630545){0.048303695495519196}
\pscircle[linecolor=chain](-2.82588692960564,0.11512234409571365){0.04022171163201156}

	\pscircle[fillcolor=white,fillstyle=solid,linewidth=0.6pt](-3.0014115746490155,1.){1.5pt}

\end{pspicture*}
\caption{Pappus chain}\label{fig-pappus}
\end{center}
\end{figure}

On the other hand, in a finite M\"obius plane there are only
finitely many circles, and therefore it is conceivable that
a Pappus chain closes after finitely many steps. It is the aim
of this paper to investigate the corresponding questions: 
Do Steiner chains exist if the carrier
circles intersect or are tangent to each other?
The easier case, when the carrier circles are tangent,
will be treated in Section~\ref{sec-tangent}, the more delicate case
of intersecting carrier circles is discussed in Section~\ref{sec-intersecting}. 
Since these chains do not exist
in the classical M\"obius plane, we call them {\em exotic Steiner chains}.
\section{Preliminaries}\label{sec-preliminaries}
A M\"obius plane is a triple $(\mathbb{P},\mathbb{B}, \mathbb I)$
of points $\mathbb{P}$, circles $\mathbb{B}$ and an incidence relation
$\mathbb I$, satisfying three axioms: 
\begin{enumerate}
  \item[(M1)] 
    For  any three elements  $P,Q, R \in  \mathbb{P}$, $P \neq  Q$, $P \neq  R$ and $Q \neq  R$, there
    exists  a  unique element  $C   \in  \mathbb{B}$  with  $P \in C$, $Q \in C$ and $R \in C$.
  \item[(M2)] 
    For  any $C \in  \mathbb{B}$, $P,Q   \in  \mathbb{P}$ with  $P \in C$ and $Q \notin C$, 
    there exists a unique element $D \in  \mathbb{B}$ such that $P \in D$ and $Q \in D$, 
    but for all $R \in \mathbb{P}$ with $R \in C, P \neq R$, we have $R \notin D$.
  \item[(M3)] 
    There are four  elements $P_1, P_2, P_3, P_4 \in \mathbb{P}$ such that for all $C \in \mathbb{B}$, 
    we have $P_i \notin C$ for at least one $i \in \{ 1,2,3,4 \}$.
    Moreover, for all $C \in \mathbb{B}$ there exists a $P \in \mathbb{P}$ with $P \in C$.
\end{enumerate}
A Steiner chain in a M\"obius plane is defined as follows: 
\begin{defi}\label{def:steiner-chain}
	Given two circles $C_1,C_2$, we say that they carry a (proper) \emph{Steiner chain of length} $k\geq 3$, if there exists a sequence (chain) of distinct circles $M_1,\ldots,M_k$ such that
	\begin{enumerate}
		\item each circle $M_i$ is tangent to the next one $M_{i+1}$, where indices are taken cyclically,
		\item each circle in the chain is tangent to $C_1$ and $C_2$, and
		\item no point is contact point of more than two tangent circles.\label{def:itm:contact-points}
	\end{enumerate}
\end{defi}
The condition~\ref{def:itm:contact-points} excludes degenerate Steiner chains
as the one in Figure~\ref{fig-degenerate}.
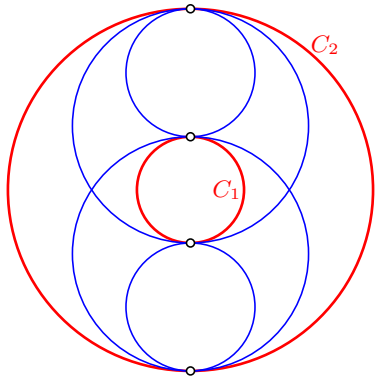
\begin{figure}
\begin{center}

\psset{unit=.8cm,algebraic=true,dimen=middle,dotstyle=o,dotsize=5pt 0,linewidth=0.6pt,arrowsize=3pt 2,arrowinset=0.25}

\begin{pspicture*}(-3.1,-2.15)(3.1,4.15)
	\pscircle[linecolor=carrier,linewidth=1pt](0.,1.){0.880542324717352}
	\pscircle[linecolor=carrier,linewidth=1pt](0.,1.){3.0014115746490155}
	\pscircle[linecolor=chain](0.,-0.06043462496583174){1.9409769496831837}
	\pscircle[linecolor=chain](0.,-0.9409769496831838){1.0604346249658319}
	\pscircle[linecolor=chain](0.,2.9409769496831837){1.0604346249658316}
	\pscircle[linecolor=chain](0.,2.060434624965832){1.9409769496831837}
	\rput[tl](1.9762013337083462,3.56){\small{$\red{C_2}$}}
	\rput[cl](0.6,1.){\small{$\red{C_1}$}}

	\pscircle[fillcolor=white,fillstyle=solid,linewidth=0.6pt](0.,-2.0014115746490155){1.5pt}
	\pscircle[fillcolor=white,fillstyle=solid,linewidth=0.6pt](0.,0.11945767528264795){1.5pt}
	\pscircle[fillcolor=white,fillstyle=solid,linewidth=0.6pt](0.,1.880542324717352){1.5pt}
	\pscircle[fillcolor=white,fillstyle=solid,linewidth=0.6pt](0.,4.0014115746490155){1.5pt}
\end{pspicture*}
\caption{Degenerate Steiner chain}\label{fig-degenerate}
\end{center}
\end{figure}

In order to make this presentation selfcontained, we briefly describe the
construction of a finite Miquel plane, which is based upon the 
Galois field $GF(q)$ and its quadratic extension $GF(q)(\alpha)\cong GF(q^2)$,
where $\alpha$ is a nonsquare in
$GF(q)$. Recall that the conjugation
$$GF(q^2)\to GF(q^2),\quad z\mapsto\bar z:=z^{q}$$
is an automorphism of $GF(q^2)$, whose fixed point set is $GF(q)$
(see, e.g.\ \cite[Theorem 2.21]{lidl:1986:introduction}).
We also define the {\em norm\/} and the {\em trace\/} in the usual way\setlength\arraycolsep{2pt}
$$\begin{array}{lll}
\operatorname{N} :& GF(q^2) \to GF(q),\quad &z \mapsto z\bar z\\[1mm]
\operatorname{Tr}:& GF(q^2) \to GF(q), &z \mapsto z+\bar z.
\end{array}
$$
The finite Miquelian M\"obius plane 
constructed over the pair of finite fields $GF(q)$ and $GF(q^2)$
will be denoted by $\mathbb{M}(q)$, and 
$q$ is called the \emph{order} of $\mathbb{M}(q)$:
The $q^2+1$ points of $\mathbb{M}(q)$ are the elements of $GF(q^2)$
together with a point at infinity, denoted by $\infty$.
There are two different types of circles:
Circles of the first type, are solutions of the equation
$\operatorname{N}(z-c) = r$, i.e.\
\begin{align} \label{circle1}
B^1_{(c,r)}: \ (z-c)(\overline{z}-\overline{c}) = r
\end{align}
for $c \in GF(q^2)$ and $r \in GF(q)\backslash \{0\}$. 
It is easy to see that there are $q+1$ points in $GF(q^2)$ 
on every such circle, and that there are $q^2(q-1)$ circles of the first type.

Circles of the second type are solutions of the equation
$\operatorname{Tr}(\overline{c}z) = r$, i.e.\
  \begin{align} \label{circle2}
    B^2_{(c,r)}: \ \overline{c}z+c\overline{z}=r
  \end{align}
for $c \in GF(q^2)\backslash \{0\}$ and $r \in GF(q)$, together with $\infty$.
Hence, circles of the second type also contain $q+1$ points.
There are $(q^2-1)q$ choices for $c$ and $r$, but scaling with any element of $GF(q)\backslash\{0\}$ leads to the same circle. 
Consequently, there are $q(q+1)$ circles of the second type. 

Now, let $a,b,c,d \in GF(q^2)$ such that $ad-bc \neq 0$. 
The bijective map $\Phi$ defined by
   $$ \Phi: \mathbb{M}(q) \rightarrow \mathbb{M}(q), \ \Phi(z)=\begin{cases}
  \frac{az+b}{cz+d}&\text{if $z\neq\infty$ and $cz+d\neq0$}\\
  \infty   &\text{if $z\neq\infty$ and $cz+d=0$}\\
  \frac ac &\text{if $z=\infty$ and $c\neq 0$}\\
  \infty   &\text{if $z=\infty$ and $c= 0$}\end{cases}$$
is called a \emph{M\"obius transformation} of $\mathbb{M}(q)$.
Every M\"obius transformation is an automorphism of $\mathbb{M}(q)$: It maps circles to circles
and preserves incidence.
Note that a M\"obius transformation operates three times sharply transitive, 
i.e.\ there is a unique M\"obius transformation mapping any three points into any other three given points.
For more background information on finite M\"obius planes, one can refer to \cite{MR1434062}.

The following Lemma states the conditions for the mutual position of two circles.
\begin{lem}\label{prop:inters_circles_first_type}
\begin{enumerate}
\item	Let $B^1_{(c_1,r_1)}$ and $B^1_{(c_2,r_2)}$ be two distinct circles of the first type, and
	\[
		D := (c\bar{c} + r_1 - r_2)^2 - 4c\bar{c}r_1
	\]
	for $c = c_2-c_1$. Then:
	\begin{itemize}
		\item If $D\ne 0$ is a square in $GF(q)$, the circles are disjoint.
		\item If $D = 0$, the circles touch at
		$
			z_0 = \frac{c\bar{c} + r_1-r_2}{2\bar{c}} + c_1 = \frac{1}{2} \left( c_2 + c_1 - \frac{r_2-r_1}{\bar{c}_2-\bar{c}_1} \right).
		$
		\item If $D$ is a nonsquare in $GF(q)$, the circles intersect at
		$
			z_{1,2} = \frac{(c\bar{c} + r_1 - r_2) \pm \sqrt{D}}{2\bar{c}} + c_1.
		$
	\end{itemize}
\item Let $B^1_{(c_1,r_1)}$ and $B^2_{(c_2,r_2)}$ be a circle of the first type and of the second type, respectively, and
	\[
		D := r^2 - 4c_2 \bar{c}_2 r_1
	\]
	for $r = r_2 - c_1\bar{c}_2 - \bar{c}_1c_2$. Then:
	\begin{itemize}
		\item If $D\ne 0$ is a square in $GF(q)$, the circles are disjoint.
		\item If $D = 0$, the circles touch at
		$
			z_0 = \frac{r}{2\bar{c}_2} + c_1
			= \frac{r_2 + c_1\bar{c}_2 - \bar{c}_1 c_2}{2\bar{c}_2}.
		$
		\item If $D$ is a nonsquare in $GF(q)$, the circles intersect at
		$
			z_{1,2} = \frac{r \pm \sqrt{D}}{2\bar{c}_2} + c_1.
		$
	\end{itemize}
\item Let $B^2_{(c_1,r_1)}$ and $B^2_{(c_2,r_2)}$ be two distinct circles of the second type. Then:
	\begin{itemize}
		\item If $c_1 \bar{c}_2 - \bar{c}_1 c_2 = 0$, the circles touch at $\infty$.
		\item If $c_1 \bar{c}_2 - \bar{c}_1 c_2 \ne 0$, the circles intersect at $\infty$ and
		$
			z_0 = \frac{c_1r_2 - c_2r_1}{c_1\bar{c}_2 - \bar{c}_1c_2}.
		$
	\end{itemize}
\end{enumerate}
\end{lem}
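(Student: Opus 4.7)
All three parts will be settled by solving the simultaneous system defining two circles as a system in $GF(q^2)$, with the number of solutions governed by whether a discriminant lying in $GF(q)$ is zero, a nonzero square, or a nonsquare. My plan is to exploit the norm/trace structure of $GF(q^2)/GF(q)$ to rewrite the constraints on a prospective intersection point so that it becomes a root of an explicit quadratic polynomial over $GF(q)$, subject to an additional conjugation-compatibility condition.

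\textbf{Part (i).} I would translate by $c_1$, setting $w := z - c_1$. The two equations become $w\bar w = r_1$ and $(w-c)(\bar w - \bar c) = r_2$ with $c = c_2 - c_1$. Expanding the second, using the first to eliminate $w\bar w$, and rearranging should yield $\bar c\, w + c\,\bar w = c\bar c + r_1 - r_2 =: A$. Putting $u := w\bar c$ (so $\bar u = c\bar w$), this becomes $u+\bar u = A$ while $u\bar u = (w\bar w)(c\bar c) = r_1 c\bar c$. Thus $u$ has to be a root of
\[
p(X) := X^2 - AX + r_1 c\bar c,
\]
whose discriminant over $GF(q)$ is exactly $D = A^2 - 4 r_1 c\bar c$, and, conversely, any such root satisfying $u+\bar u=A$ gives back a unique common point $z = u/\bar c + c_1$. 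The degenerate case $c=0$ is handled separately but is consistent: the formula yields $D=(r_1-r_2)^2$, a nonzero square, and two concentric circles of distinct radii are disjoint. The count then splits as follows: when $D=0$, there is a unique double root $u = A/2$, giving the tangency $z_0 = A/(2\bar c) + c_1$; when $D$ is a nonsquare, $p$ is irreducible over $GF(q)$ and its two roots $u,\bar u\in GF(q^2)\setminus GF(q)$ form a genuine conjugate pair, producing the two intersection points $z_{1,2}$; when $D\neq 0$ is a square, both roots of $p$ lie in $GF(q)$ where conjugation fixes them, so $u+\bar u = 2u = A$ would force $u = A/2$, incompatible with $p$ having distinct roots (this would force $D=0$), so no common point exists.

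\textbf{Parts (ii) and (iii).} I plan to handle part (ii) by the same substitution $w = z - c_1$: the first circle becomes $w\bar w = r_1$ and the second rewrites as $\bar c_2 w + c_2 \bar w = r_2 - c_1\bar c_2 - \bar c_1 c_2 =: r$. Taking $u := w\bar c_2$, the constraints $u+\bar u = r$ and $u\bar u = r_1 c_2\bar c_2$ make $u$ a root of $X^2 - rX + r_1 c_2\bar c_2$, whose discriminant is the stated $D$, and the counting argument from part (i) applies verbatim and yields the explicit formulas for $z_0$ and $z_{1,2}$. Part (iii) is structurally different because both second-type circles share $\infty$ automatically. For finite common points I would solve the linear system $\bar c_i z + c_i \bar z = r_i$ ($i=1,2$), whose determinant in the unknowns $(z,\bar z)$ is $\bar c_1 c_2 - c_1\bar c_2$. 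If this vanishes, proportionality of the two equations would force the circles to coincide, ruled out by distinctness, so the system is inconsistent and $\infty$ is the unique common point (tangency at $\infty$). If it is nonzero, Cramer's rule produces a unique $z_0$, and conjugating the system and invoking uniqueness shows that the companion solution for the second unknown is indeed $\overline{z_0}$; hence $z_0$ is a bona fide second common point.

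\textbf{Main obstacle.} The delicate step is the square-versus-nonsquare split in parts (i) and (ii). Since $q$ is odd one has $GF(q)^\times \subset (GF(q^2)^\times)^2$, so $\sqrt D$ exists in $GF(q^2)$ no matter what, and a naive root count in $GF(q^2)$ would miss the phenomenon. The decisive ingredient is the trace condition $u+\bar u = A$: when $D$ is a nonsquare, the roots of $p$ lie in $GF(q^2)\setminus GF(q)$ and form a conjugate pair that automatically satisfies it; when $D$ is a nonzero square, the roots lie in $GF(q)$, where each is fixed by conjugation, so the trace condition collapses and kills both candidates. Making this trichotomy transparent, and then reading off the explicit tangency and intersection points from the expression $u = (A\pm\sqrt{D})/2$, is the heart of the proof.
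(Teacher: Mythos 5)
Your proof is correct, and it follows exactly the elementary computation that the paper omits (it only remarks that ``the proof is an elementary calculation''): reducing each pair of circle equations to a monic quadratic over $GF(q)$ via the substitution $u=(z-c_1)\bar c$, using the trace condition to sort out the square/nonsquare/zero trichotomy of $D$, and handling the second-type pair by a linear system plus a conjugation-and-uniqueness argument. All the delicate points (the case $c=0$, the failure of the trace condition when $D$ is a nonzero square, and the verification that the Cramer solution satisfies $Y_0=\overline{X_0}$) are addressed.
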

The proof is an elementary calculation.

Below we will use M\"obius transformations to bring two general carrier circles 
to a standard position. In order to formulate conditions on the
existence of Steiner chains for intersecting carrier circles in an arbitrary position, we will
need a quantity associated to the two circles,  which remains invariant under 
M\"obius transformations. This invariant is the capacitance, which
was introduced in~\cite{nhkk}:
\begin{defi}\label{def:capacitance}
	The \emph{capacitance} assigns a number in $GF(q)$ to any pair of circles in $\miqplane(q)$. It is defined as
	\begin{align*}
		\capacfunc(B^1_{(c_1,r_1)},B^1_{(c_2,r_2)})
		& := \frac{1}{r_1r_2} \left( r_1 + r_2 - (c_1 - c_2)(\bar{c}_1 - \bar{c}_2) \right)^2, \\
		\capacfunc(B^1_{(c_1,r_1)},B^2_{(c_2,r_2)})
		& := \frac{1}{r_1c_2\bar{c}_2} (c_1\bar{c}_2 + \bar{c}_1c_2 - r_2)^2, \\
		\capacfunc(B^2_{(c_1,r_1)},B^2_{(c_2,r_2}))
		& := \frac{1}{c_1\bar{c}_1c_2\bar{c}_2} (c_1\bar{c}_2 + \bar{c}_1 c_2)^2,
	\intertext{and}
		\capacfunc(B^2_{(c_2,r_2)},B^1_{(c_1,r_1)})
		& := \capacfunc(B^1_{(c_1,r_1)},B^2_{(c_2,r_2)}).
	\end{align*}
\end{defi}

The following Theorem tells us that the capacitance is invariant under M\"obius transformations.

\begin{thm}[Theorem 5.1 in~\cite{nhkk}]
	Let $B, B'\in \miqplane(q)$ be two circles. If $\Phi$ is a M\"obius transformation, then
	\[
		\capacfunc(B, B') = \capacfunc(\Phi(B), \Phi(B')).
	\]
\end{thm}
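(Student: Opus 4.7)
The plan is to verify invariance on a set of generators of the Möbius group and then conclude by composition. Every Möbius transformation of $\miqplane(q)$ is a composition of three elementary types: translations $\tau_b\colon z\mapsto z+b$ with $b\in GF(q^2)$, scalings $\sigma_a\colon z\mapsto az$ with $a\in GF(q^2)\setminus\{0\}$, and the inversion $\iota\colon z\mapsto 1/z$; these generate $\mathrm{PGL}_2(GF(q^2))$. Hence it is enough to check that $\capacfunc(B,B')=\capacfunc(\Phi(B),\Phi(B'))$ when $\Phi$ is of one of these three forms, and for each of the three (unordered) pairs of circle types listed in Definition~\ref{def:capacitance}.

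First I would determine how the two circle types transform under each generator. Direct substitution in~\eqref{circle1} and~\eqref{circle2} shows that translations and scalings preserve the type, with
\[
\tau_b(B^1_{(c,r)})=B^1_{(c+b,\,r)},\qquad
\tau_b(B^2_{(c,r)})=B^2_{(c,\,r+\bar c\,b+c\,\bar b)},
\]
\[
\sigma_a(B^1_{(c,r)})=B^1_{(ac,\,a\bar a\,r)},\qquad
\sigma_a(B^2_{(c,r)})=B^2_{(ac,\,a\bar a\,r)}.
\]
Inversion, however, can interchange the two types according to whether the circle contains the point $0$:
\[
\iota(B^1_{(c,r)})=\begin{cases} B^1_{\bigl(\bar c/(c\bar c-r),\ r/(c\bar c-r)^2\bigr)} & \text{if } c\bar c\ne r,\\[1mm] B^2_{(\bar c,\,1)} & \text{if } c\bar c = r,\end{cases}
\]
and analogously $\iota(B^2_{(c,r)})=B^1_{(\bar c/r,\ c\bar c/r^2)}$ if $r\ne 0$, while $\iota(B^2_{(c,r)})=B^2_{(\bar c,\,0)}$ if $r=0$.

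The verification then reduces to plugging these formulas into the three capacitance expressions and simplifying. For $\tau_b$ the difference $c_1-c_2$ is preserved, and the additive shift in $r_i$ of the type-$2$ formulas is precisely what is needed to keep the bracket $c_1\bar c_2+\bar c_1 c_2-r_2$ invariant; for $\sigma_a$ each factor $a\bar a$ appears to the same power in numerator and denominator and cancels. The inversion case hinges on the algebraic identity
\[
r_1+r_2-(c_1-c_2)(\bar c_1-\bar c_2)=(r_1-c_1\bar c_1)+(r_2-c_2\bar c_2)+(c_1\bar c_2+\bar c_1 c_2),
\]
which, after substituting the image data $c_i'=\bar c_i/D_i$, $r_i'=r_i/D_i^2$ with $D_i:=c_i\bar c_i-r_i$, produces a common factor $D_1D_2$ in the squared numerator that precisely absorbs the $(D_1D_2)^2$ appearing in the prefactor $1/(r_1'r_2')$. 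Analogous reductions, with $D_i$ replaced by $r_i$, handle the mixed pair and the type-$2$-to-type-$1$ case.

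The main obstacle is bookkeeping rather than depth: inversion splits each of the three type-pairs into subcases according to which of the two circles contain $0$, so the capacitance formula one starts from may differ from the one in which one finishes. Once the substitutions above are carried out systematically, the identity and its variants deliver the required cancellations in every subcase. A slicker alternative would be to exhibit $\capacfunc$ as an intrinsic $\mathrm{PGL}_2(GF(q^2))$-invariant quantity attached to the pair of circles---for instance as a Gram-type determinant in the ambient bilinear form used to define the Miquelian plane---which would make invariance manifest; the generator-by-generator computation outlined here is however the most elementary route.
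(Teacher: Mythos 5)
Your generator-by-generator verification is correct: the transformation formulas for the two circle types under translations, scalings and inversion check out, and the identity $r_1+r_2-(c_1-c_2)(\bar c_1-\bar c_2)=-(c_1\bar c_1-r_1)-(c_2\bar c_2-r_2)+(c_1\bar c_2+\bar c_1 c_2)$ does produce exactly the cancellation needed in the inversion case, including the subcases where a circle through $0$ changes type. Note that the paper itself does not prove this theorem but imports it from the cited reference, whose proof proceeds by essentially the same decomposition into elementary transformations, so your argument is the standard one and fills in nothing the authors intended to be novel.
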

For the reader's convenience we close this section with a few standard facts about finite fields (see~\cite{lidl:1986:introduction})
which we will tacitly use in the sequel.
\begin{facts}\label{facts}
\begin{itemize}
\item The multiplicative group of a finite field is cyclic. The multiplicative order
of an element in $GF(q)\setminus\{0\}$ divides $q-1$.
\item An element $z\in GF(q)\setminus\{0\}$ is a square in $GF(q)$
if and only if $z^{\frac{q-1}2}=1$.
\item The product of two squares or two nonsquares is a square,
while the product of a nonzero square and a nonsquare is a nonsquare.
\item $-1$ is a square in $GF(q)$ if and only if $q\equiv 1\mod 4$,
or equivalently, if $p\equiv 1\mod 4$ or $m$ even.
\item Any $z\in GF(q)$ is a square in $GF(q^2)$.
\item Let $z\in GF(q)$ be a nonsquare in $GF(q)$ and $\sqrt z$
one of its square roots in $GF(q^2)$. Then $\overline{\sqrt z}=-\sqrt z$.

\end{itemize}
\end{facts}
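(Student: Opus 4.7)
The plan is to derive all six items from a single structural fact, the cyclicity of $GF(q)^*$; once that is established, the remaining five reduce to elementary manipulations with a chosen generator and with the Frobenius conjugation. The only step that carries genuine content is the first bullet, and it is really the bottleneck of the whole list.

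First I will prove that $GF(q)^*$ is cyclic by the standard counting argument: for each divisor $d \mid q-1$, the polynomial $X^d-1$ has at most $d$ roots in a field, so if $N(d)$ denotes the number of elements of order exactly $d$ in $GF(q)^*$, then $\sum_{e\mid d} N(e) \le d$. Combining this with $\sum_{d\mid q-1} N(d) = q-1 = \sum_{d\mid q-1} \phi(d)$ forces $N(d) = \phi(d)$ for every divisor, in particular $N(q-1) > 0$, producing a generator. Lagrange's theorem then yields the divisibility statement about orders.

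Fixing a generator $g$ and writing $z = g^k$, the element $z$ is a square iff $k$ is even iff $z^{(q-1)/2} = g^{k(q-1)/2} = 1$, which is Euler's criterion. The squares then form the index-$2$ kernel of $z \mapsto z^{(q-1)/2}$, so $GF(q)^*/(GF(q)^*)^2 \cong \ZZ/2\ZZ$ and the product rule for squares and nonsquares is exactly the addition rule in this quotient. Plugging $z=-1$ into Euler's criterion gives $-1$ square iff $(q-1)/2$ is even iff $q \equiv 1 \pmod 4$; a short case analysis on $p\pmod 4$ and on the parity of $m$ checks that $q = p^m \equiv 1\pmod 4$ precisely when $p\equiv 1\pmod 4$ or $m$ is even.

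For the last two facts I would work inside $GF(q^2)^*$, which by the same cyclicity argument has order $(q-1)(q+1)$. Any $z \in GF(q)^*$ satisfies $z^{q-1} = 1$ and hence lies in the unique subgroup of order $q-1$; since $q$ is odd, $q+1$ is even, so $q-1$ divides $(q^2-1)/2$ and $z^{(q^2-1)/2} = 1$, making $z$ a square in $GF(q^2)$ by Euler's criterion applied in the larger field. For the conjugation claim, let $z\in GF(q)$ be a nonsquare and pick a root $\sqrt z\in GF(q^2)$; applying the Frobenius automorphism to $(\sqrt z)^2 = z$ and using $\bar z = z$ gives $(\overline{\sqrt z})^2 = z$, so $\overline{\sqrt z} = \pm\sqrt z$. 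The plus sign would place $\sqrt z$ in the fixed field of conjugation, which is $GF(q)$, contradicting that $z$ is a nonsquare there; hence $\overline{\sqrt z} = -\sqrt z$. The main obstacle in the whole program is simply setting up the cyclicity counting cleanly — the remaining items are then essentially bookkeeping.
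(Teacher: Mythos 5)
Your proposal is correct in substance, but it necessarily takes a different route from the paper, because the paper does not prove these facts at all: it records them as standard and refers the reader to Lidl--Niederreiter. What you have written is essentially the self-contained derivation one would find there: cyclicity of $GF(q)^*$ via root counting, Euler's criterion and the quadratic-character homomorphism as corollaries, and the last two items by working in $GF(q^2)^*$ and with the Frobenius $z\mapsto z^q$. The trade-off is clear: the paper buys brevity by outsourcing, you buy self-containedness at the cost of half a page. One step should be tightened: as literally stated, the two numerical constraints $\sum_{e\mid d}N(e)\le d$ and $\sum_{d\mid q-1}N(d)=q-1=\sum_{d\mid q-1}\phi(d)$ do not by themselves force $N(d)=\phi(d)$ --- for instance $N(1)=1$, $N(2)=0$, $N(4)=3$ satisfies both with $q-1=4$. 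You need the standard intermediate observation that $N(d)\in\{0,\phi(d)\}$ for each $d$: if some element has order $d$, the cyclic subgroup it generates already exhausts the at most $d$ roots of $X^d-1$, so every element of order $d$ is one of its $\phi(d)$ generators. With that dichotomy, the total count $q-1=\sum_{d\mid q-1}\phi(d)$ forces $N(d)=\phi(d)$ for every $d$, in particular $N(q-1)>0$. The remaining five items are fine as written; for the fifth, note only that $z=0$ is trivially a square, and your argument handles $z\ne 0$ (here $(q+1)/2\in\ZZ$ because $q$ is odd, which is exactly the standing assumption of the paper).
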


\section{Exotic Steiner chains in tangent carrier circles}\label{sec-tangent}
\subsection{The standard case}
Let us start with the two circles $B_{-1} := B^2_{(1,-1)}$ and $B_1 := B^2_{(1,1)}$ 
in $\mathbb M(q)$ with equations
\[
	B_{-1}\colon z + \bar{z} = -1\text{\quad and \quad}B_1\colon z + \bar{z} = 1.
\]
These are circles of the second type, and since $p$ is odd, they are different.
Since both equations cannot be satisfied at the same time, the circles are tangent at~$\infty$
(see also Lemma~\ref{prop:inters_circles_first_type}). 

Let $\tau(B_{-1}, B_1)$ denote the set of all common tangent circles of $B_{-1}$ and $B_1$ of the first type.
Observe that circles of the 
second type cannot be part of a  proper Steiner chain carried by $B_{-1}$ and $B_1$, 
because $\infty$ is already used as the contact point of the carrier circles $B_{-1}$ and $B_1$ 
(see Definition~\ref{def:steiner-chain}\ref{def:itm:contact-points}). 
This is why we limit our search to circles of the first type.

According to Lemma~\ref{prop:inters_circles_first_type}, $B^1_{(c,r)}$ is in $\tau(B_{-1}, B_1)$ if and only if
\[
	(c + \bar{c} + 1)^2 = 4r \quad \text{and} \quad
	(c + \bar{c} - 1)^2 = 4r.
\]
This implies $c + \bar{c} = 0$ and $4r = 1$. The condition for $c$ is the equation 
of a circle of the second type, so
there are $q$ circles of the first type in $\tau(B_{-1}, B_1)$. We summarize our findings 
in a Lemma. Notice that $4\ne 0$ because $p$ is odd.

\begin{lem}\label{lem:tangent_circles_b1bm1}
	There are $q$ circles of the first type tangent to both 
	$B_{-1}$ and $B_1$. They are given by 
	$B^1_{(c,r)}$ with $c\in B_0 := B^2_{(1,0)}$ and $r = \frac{1}{4}$.
\end{lem}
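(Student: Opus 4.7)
The plan is to reduce the statement to two simultaneous tangency conditions coming from Lemma~\ref{prop:inters_circles_first_type}(ii) and then solve them directly. First I would justify, as the surrounding text already suggests, that only circles of the first type can be common tangents that qualify for a proper Steiner chain: any circle of the second type passes through $\infty$, which is the contact point of $B_{-1}$ and $B_1$, so including it would violate Definition~\ref{def:steiner-chain}\ref{def:itm:contact-points}. This lets me restrict attention to candidates $B^1_{(c,r)}$ with $c\in GF(q^2)$ and $r\in GF(q)\setminus\{0\}$.

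Next I would apply part~(ii) of Lemma~\ref{prop:inters_circles_first_type} twice, once with $B^2_{(1,-1)}$ and once with $B^2_{(1,1)}$ in the role of the second-type circle. In both applications one has $c_2\bar c_2=1$, so the tangency condition $D=0$ becomes
\[
(c+\bar c+1)^2=4r\qquad\text{and}\qquad (c+\bar c-1)^2=4r,
\]
respectively. Subtracting these two equations yields $4(c+\bar c)=0$, and since $p$ is odd I can divide by $4$ to obtain $c+\bar c=0$. Substituting back into either equation gives $4r=1$, hence $r=\tfrac14$.

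Finally I would identify the locus of admissible $c$. The equation $c+\bar c=0$ is exactly the defining equation $\trace(\bar 1\cdot z)=0$ of the second-type circle $B_0=B^2_{(1,0)}$. This circle has $q+1$ points, one of which is $\infty$; the remaining $q$ points are precisely the elements of $GF(q^2)$ satisfying $c+\bar c=0$, and each produces a distinct first-type circle $B^1_{(c,1/4)}$ (the parameter $r=\tfrac14$ is nonzero because $p$ is odd, so each choice of $c$ really defines a circle). Conversely any common tangent of the first type is obtained this way by the derivation above, establishing the bijection and the count of $q$.

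There is no real obstacle: the argument is a two-line computation, and the only subtlety — recognising the invariant-looking condition $c+\bar c=0$ as the circle $B^2_{(1,0)}$ so that the count $q$ is transparent — is essentially built into the way the lemma is phrased.
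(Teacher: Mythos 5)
Your proposal is correct and follows essentially the same route as the paper: apply Lemma~\ref{prop:inters_circles_first_type}(ii) to both carrier circles, subtract the two tangency conditions to get $c+\bar c=0$ and $r=\tfrac14$, and recognise the locus of $c$ as the $q$ finite points of $B^2_{(1,0)}$. No gaps.
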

As we are trying to construct a chain of circles, let us pick $B^1_{(0,\frac{1}{4})}\in \tau(B_{-1}, B_1)$ as our starting circle. Lemma~\ref{lem:condition_for_chain} tells us under what circumstances such a chain may possibly exist.

\begin{lem}\label{lem:condition_for_chain}
	If $-1$ is a nonsquare in $GF(q)$, then there are exactly 
	two circles $B^1_{(c,r)}\in \tau(B_{-1}, B_1)$ tangent to 
	$B^1_{(0,\frac{1}{4})}$. They are given by $c = \pm \sqrt{-1}$ and $r = \frac{1}{4}$.
	If $-1$ is a square in $GF(q)$, there are no common tangent 
	circles of $B_{-1},B_1$ and $B^1_{(0,\frac{1}{4})}$.
\end{lem}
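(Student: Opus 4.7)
The strategy is to combine the constraint from Lemma~\ref{lem:tangent_circles_b1bm1} (characterizing $\tau(B_{-1},B_1)$) with the tangency condition from Lemma~\ref{prop:inters_circles_first_type}(i), and then to read off the answer from the Facts on squares in $GF(q)$ and $GF(q^2)$.

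First, I would use Lemma~\ref{lem:tangent_circles_b1bm1} to restrict the search: any candidate $B^1_{(c,r)}\in\tau(B_{-1},B_1)$ satisfies $c+\bar c=0$ and $r=\tfrac14$. So the unknown reduces to $c\in GF(q^2)$ with $\bar c=-c$, and I may work with fixed radius $\tfrac14$.

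Next, I apply Lemma~\ref{prop:inters_circles_first_type}(i) to the pair $B^1_{(0,1/4)}$ and $B^1_{(c,1/4)}$. Since $r_1=r_2=\tfrac14$ and the center difference is $c$, the discriminant simplifies to
\[
D=(c\bar c)^2-c\bar c=c\bar c\,(c\bar c-1).
\]
The two circles are distinct (as we want a proper chain) precisely when $c\neq0$, so $c\bar c\neq0$, and tangency $D=0$ forces $c\bar c=1$. Combined with $\bar c=-c$ this gives the single equation
\[
c^2=-1.
\]

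Finally, I case-split on whether $-1$ is a square in $GF(q)$. If $-1$ is a \emph{nonsquare} in $GF(q)$, then by the Facts every element of $GF(q)$ is a square in $GF(q^2)$, so $c=\pm\sqrt{-1}\in GF(q^2)$ exists; moreover the Facts give $\overline{\sqrt{-1}}=-\sqrt{-1}$, so the required condition $\bar c=-c$ holds automatically for both roots, and these two values of $c$ are distinct because $p$ is odd. This yields exactly the two asserted circles. If instead $-1$ is a square in $GF(q)$, then both roots of $c^2=-1$ lie in $GF(q)$, hence satisfy $\bar c=c$; combining with the necessary condition $\bar c=-c$ would force $2c=0$, i.e.\ $c=0$ (again using $p$ odd), contradicting $c^2=-1$. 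Therefore no such circle exists.

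The argument is essentially a direct computation, and the only mildly delicate point is making sure the two conjugation conditions $\bar c=-c$ and $c\bar c=1$ are handled simultaneously; this is precisely where the dichotomy on whether $-1$ is a square enters, via the Facts on conjugates of square roots.
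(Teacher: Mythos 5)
Your proposal is correct and follows essentially the same route as the paper: restrict to $c+\bar c=0$, $r=\tfrac14$ via Lemma~\ref{lem:tangent_circles_b1bm1}, impose the tangency discriminant from Lemma~\ref{prop:inters_circles_first_type} to get $c^2=-1$ (with $c\neq0$), and then use the conjugation condition $\bar c=-c$ together with the Facts to see that solutions exist exactly when $-1$ is a nonsquare in $GF(q)$. The only cosmetic difference is that you first deduce $c\bar c=1$ and then substitute $\bar c=-c$, whereas the paper substitutes first and factors $c^2(c^2+1)=0$; these are equivalent.
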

\begin{proof}
	Let $B^1_{(c,r)}$ be in $\tau(B_{-1}, B_1)$, i.e. $c + \bar{c} = 0$ and $r = \frac{1}{4}$.
	For $B^1_{(c,r)}$ to be tangent to $B^1_{(0,\frac{1}{4})}$ as well, it has to 
	satisfy the  condition from Lemma~\ref{prop:inters_circles_first_type}
	\[
		(c\bar{c} + \frac{1}{4} - \frac{1}{4})^2 = 4c\bar{c}\cdot \frac{1}{4},
	\]
	which is equivalent to
	\[
		c^2(c^2 + 1)=0
	\]
	for $\bar{c} = -c$. As $c\ne 0$ (otherwise $B^1_{(c,r)}$ coincides 
	with $B^1_{(0,\frac{1}{4})}$), it follows that $c^2 = -1$ and thus 
	$c = \pm \sqrt{-1}$, where $\sqrt{-1}$ denotes any square root 
	of $-1$. 
	
	Moreover, the relation $\bar{c} = -c$ makes it clear that $c\notin GF(q)$. 
	Consequently, there only exists a solution if $-1$ is a nonsquare in $GF(q)$. 
\end{proof}
Assume now that $-1$ is a nonsquare in $GF(q)$. As we have seen, in this 
case the two circles $B^1_{(0,\frac{1}{4})}$ and $B^1_{(\sqrt{-1},\frac{1}{4})}$ 
are in $\tau(B_{-1},B_1)$ and are mutually tangent.

At this point we apply the M\"obius transformation  $T: z \mapsto z + \sqrt{-1}$:
Indeed, $T$ leaves $B_{-1}$ and $B_1$ invariant.
On the other hand, $B^1_{(0,\frac{1}{4})}$ is mapped to $B^1_{(\sqrt{-1},\frac{1}{4})}$, 
while $B^1_{(\sqrt{-1},\frac{1}{4})}$ is mapped to $B^1_{(2\sqrt{-1},\frac{1}{4})}$. 
By the properties of M\"obius transformations, both circles are still 
tangent to each other, as well as tangent to $B_{-1}$ and $B_1$.

This induces a Steiner chain: By applying above translation $k$ times, 
we get the $k$-th circle in the chain, and for $k = p$, we are back to our starting circle:
\[
	B^1_{(0, \frac{1}{4})} \to
	B^1_{(\sqrt{-1}, \frac{1}{4})} \to
	B^1_{(2\sqrt{-1}, \frac{1}{4})} \to \cdots \to
	B^1_{(p\sqrt{-1}, \frac{1}{4})} = B^1_{(0, \frac{1}{4})}.
\]
Recall that there are $q=p^m$ circles in $\tau(B_{-1},B_1)$. Hence
there are exactly $p^{m-1}$ Steiner chains of length $p$ each. 
To see this, take an element $c\in B_0$, such that $B^1_{(c,\frac{1}{4})}$ is not in the chain.
The translation $T$ transforms our original chain into a chain starting from 
$B^1_{(c,\frac{1}{4})}$ while leaving the carrier circles $B_{-1}$ and $B_1$ invariant. 
We can repeat this process as long as there are circles left in 
$\tau(B_{-1},B_1)$ that have not been used in a chain.

Therefore we just proved the following
\begin{prop}
	The circles $B^2_{(1,-1)}$ and $B^2_{(1,1)}$ in $\mathbb M(q)$, $q=p^m$, carry a Steiner 
	chain if and only if $q\equiv 3\mod 4$. In this case 
	there are $p^{m-1}$ different Steiner chains, and each chain has length $p$.
\end{prop}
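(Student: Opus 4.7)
The plan is to deduce the proposition from Lemmas~\ref{lem:tangent_circles_b1bm1} and~\ref{lem:condition_for_chain} by exploiting the translation symmetry that fixes the carrier pair. From Facts~\ref{facts} I first record that $-1$ is a nonsquare in $GF(q)$ exactly when $q \equiv 3\mod 4$, and that in this case a root $\sqrt{-1}\in GF(q^2)$ satisfies $\overline{\sqrt{-1}} = -\sqrt{-1}$, i.e.\ $\sqrt{-1}\in B_0$. By the remark preceding Lemma~\ref{lem:tangent_circles_b1bm1}, a proper chain carried by $B_{-1},B_1$ may only use circles of the first type, so its members lie in $\tau(B_{-1},B_1) = \{B^1_{(c,1/4)} : c\in B_0\}$, a set of cardinality $q$.

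For the existence direction, I would fix $q\equiv 3\mod 4$ and consider the translation $T : z\mapsto z+\sqrt{-1}$. Since $\sqrt{-1}\in B_0$, this M\"obius transformation preserves both $B_{-1}$ and $B_1$ while sending $B^1_{(c,1/4)}$ to $B^1_{(c+\sqrt{-1},1/4)}$. Starting from Lemma~\ref{lem:condition_for_chain}, which gives tangency of $B^1_{(0,1/4)}$ and $B^1_{(\sqrt{-1},1/4)}$, the iterates under $T$ produce a cycle $B^1_{(j\sqrt{-1},1/4)}$, $j=0,1,\ldots,p-1$, because $\sqrt{-1}$ has additive order $p$ in $GF(q^2)$. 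This cycle has $p\ge 3$ distinct circles, each consecutive pair tangent by M\"obius invariance, so it is a candidate Steiner chain. The non-degeneracy axiom~\ref{def:itm:contact-points} I would check with one direct application of Lemma~\ref{prop:inters_circles_first_type}: a chain circle $B^1_{(c,1/4)}$ meets its four tangent partners at $c\pm\tfrac12$ and $c\pm\tfrac{\sqrt{-1}}{2}$, which are pairwise distinct because $\pm 1 \ne \pm \sqrt{-1}$ (the latter failing would force $-1=1$, contradicting $p$ odd).

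For the converse and the count, I would transport the analysis of Lemma~\ref{lem:condition_for_chain} to an arbitrary starting circle: the map $z\mapsto z-c$ fixes $B_{-1}, B_1$ and moves $B^1_{(c,1/4)}$ to $B^1_{(0,1/4)}$, so the tangent neighbors of any $B^1_{(c,1/4)}$ inside $\tau(B_{-1},B_1)$ are exactly $B^1_{(c\pm\sqrt{-1},1/4)}$ when $q\equiv 3\mod 4$, and are absent when $q\equiv 1\mod 4$. This upgrades Lemma~\ref{lem:condition_for_chain} into a global statement that settles both directions at once. First, when $q\equiv 1\mod 4$ no chain can form, which is the ``only if'' part. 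Second, when $q\equiv 3\mod 4$ the tangency relation turns $\tau(B_{-1},B_1)$ into a disjoint union of $p$-cycles, namely the orbits of the order-$p$ subgroup $\langle T\rangle$ acting freely on $B_0$; hence there are $q/p = p^{m-1}$ Steiner chains, each of length $p$. The only subtle point in this scheme is the uniqueness of tangent neighbors --- the possibility that a chain might \emph{branch} --- and this is exactly controlled by the translated form of Lemma~\ref{lem:condition_for_chain}, which is what pins down both the length and the number of chains.
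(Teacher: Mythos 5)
Your proposal is correct and follows essentially the same route as the paper: Lemmas~\ref{lem:tangent_circles_b1bm1} and~\ref{lem:condition_for_chain} combined with the translation $z\mapsto z+\sqrt{-1}$, and a count of the orbits of the order-$p$ translation subgroup on $B_0$ to get $p^{m-1}$ chains of length $p$. You are in fact somewhat more explicit than the paper on two points it leaves implicit --- verifying condition~\ref{def:itm:contact-points} via the distinctness of the four contact points, and transporting Lemma~\ref{lem:condition_for_chain} to an arbitrary starting circle to settle the $q\equiv 1\bmod 4$ direction and rule out branching --- which is a welcome addition rather than a deviation.
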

\subsection{The general case}
Let $C_1$ and $C_2$ be two circles in $\mathbb M(q)$ that are tangent at $z_0$.
Choose two  points $z_1,z_2$ on $C_1$ and two points $z'_1,z'_2$ on $B_{-1}$.
Then there is a M\"obius transformation $T_1$ which maps $z_i$ to $z'_i$ and $z_0$ to $\infty$.
Hence $T_1(C_1)=B_{-1}$ and $T_1(C_2)$ is a circle of the second kind tangent to
$B_{-1}$ at $\infty$. 
By Lemma~\ref{prop:inters_circles_first_type}, for any 
circle $B^2_{(c,r)}$ tangent to $B_{-1}$ we have $c\in GF(q)$, 
and therefore $B^2_{(c,r)}$ is given by the equation
$z+\bar{z} = \frac{r}{c}$, with $\frac{r}{c}\in GF(q)\setminus \{-1\}$.
This means that $T_1(C_2)$ has the 
form $z+\bar{z} = r$ for $r\ne -1$. Finally, the M\"obius transformation
$T_2:z\mapsto \lambda(z+\frac12)-\frac12$, with $\lambda=\frac2{r+1}$, maps $B_{-1}$ to itself,
and  $T_1(C_2)$ to $B_1$. Hence $T=T_2\circ T_1$ maps $C_1$ to $B_{-1}$ and
$C_2$ to $B_1$, and an exotic Steiner chain exists for $C_1$ and $C_2$
if and only of there exists one for $B_{-1}$ and $B_1$.
Hence we have the following
\begin{thm}\label{thm:conclusion1}
	Let $C_1$ and $C_2$ be two tangent circles in $\mathbb M(q)$, $q=p^m$.
	If $q\equiv 3\mod 4$, then $C_1$ and
	$C_2$  carry  $p^{m-1}$ Steiner chains, 
	and each chain has length $p$. If $q\equiv 1\mod 4$, $C_1$ and
	$C_2$ do not carry a Seiner chain.
\end{thm}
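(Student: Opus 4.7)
The strategy is to reduce to the standard case $(B_{-1},B_1)$ handled in the preceding Proposition by constructing a Möbius transformation $T$ with $T(C_1)=B_{-1}$ and $T(C_2)=B_1$. Since Möbius transformations are automorphisms of $\mathbb M(q)$ and, in particular, preserve tangency and distinctness of contact points, they induce a bijection between Steiner chains carried by $(C_1,C_2)$ and Steiner chains carried by $(B_{-1},B_1)$. Once this reduction is in place, the statement of the theorem is immediate from the Proposition (which gives $p^{m-1}$ chains of length $p$ if $q\equiv 3\bmod 4$ and none if $q\equiv 1\bmod 4$).

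To build $T_1$, I would exploit 3-transitivity of the Möbius group. Let $z_0$ denote the tangent point of $C_1$ and $C_2$, pick two further points $z_1,z_2\in C_1\setminus\{z_0\}$ and two distinct finite points $z'_1,z'_2\in B_{-1}$, and let $T_1$ be the unique Möbius transformation sending $(z_0,z_1,z_2)\mapsto(\infty,z'_1,z'_2)$. Since a circle is determined by any three of its points and circles are mapped to circles, $T_1(C_1)=B_{-1}$. The image $T_1(C_2)$ passes through $\infty$, hence is a circle of the second type $B^2_{(c,s)}$, and it is tangent to $B_{-1}=B^2_{(1,-1)}$ at $\infty$. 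By Lemma~\ref{prop:inters_circles_first_type}(iii) this forces $c\bar{1}-\bar{c}\cdot 1=0$, i.e.\ $c\in GF(q)\setminus\{0\}$. Dividing the defining equation by $c$, we may write $T_1(C_2)\colon z+\bar z=r$ for some $r\in GF(q)$, and $r\neq -1$ because $T_1(C_2)\neq B_{-1}$.

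Next I would apply $T_2\colon z\mapsto \lambda(z+\tfrac12)-\tfrac12$ with $\lambda:=2/(r+1)\in GF(q)\setminus\{0\}$, which is a well-defined Möbius transformation since $r+1\neq 0$. A direct substitution yields
\[
 T_2(z)+\overline{T_2(z)}=\lambda\bigl(z+\bar z+1\bigr)-1,
\]
so $T_2$ leaves $B_{-1}$ (where $z+\bar z=-1$) invariant as a set and maps $T_1(C_2)$ (where $z+\bar z=r$) to $B_1$ (where $z+\bar z=1$). Setting $T:=T_2\circ T_1$ completes the reduction and the theorem follows from the Proposition.

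The argument is almost entirely bookkeeping; the only place requiring a moment's care is verifying that $T_1(C_2)$ has the form $z+\bar z=r$ with $r\neq -1$, so that the scaling factor $\lambda$ is a well-defined element of $GF(q)\setminus\{0\}$. Beyond that, no genuine obstacle remains, since the hard work of counting the chains has already been done in the standard case.
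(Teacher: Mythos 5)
Your proposal is correct and follows essentially the same route as the paper: normalize via a Möbius transformation sending the tangency point to $\infty$ so that $C_1\mapsto B_{-1}$ and $C_2\mapsto z+\bar z=r$ with $r\neq-1$, then rescale with $T_2\colon z\mapsto\frac{2}{r+1}(z+\tfrac12)-\tfrac12$ to reach $B_1$, and invoke the Proposition for the standard pair. The paper's argument is identical, down to the choice of $T_2$.
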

\section{Exotic Steiner chains for intersecting carrier circles}\label{sec-intersecting}
The case of intersecting carrier circles is particularly more
delicate than the case of tangent carrier circles treated in the previous section.
We start again by a standard situation.
\subsection{The standard case}
Let us start with two different circles of the second type
$B^2_{(\gamma_1,0)}$ and $B^2_{(\gamma_2,0)}$ intersecting 
in $0$ and $\infty$. Then, by Lemma~\ref{prop:inters_circles_first_type},
we have $\gamma_1\bar\gamma_2-\bar\gamma_1\gamma_2\neq 0$ and we 
will prove
\begin{lem}\label{lem:tangent_circles_for_intersecting_carrier_circles}
If $\gamma_1\gamma_2$ is a square in $GF(q^2)$, there are exactly 
$2(q-1)$ circles in $\tau(B^2_{(\gamma_1,0)},B^2_{(\gamma_2,0)})$. 
If $\gamma_1\gamma_2$ is a nonsquare, $B^2_{(\gamma_1,0)}$ and 
$B^2_{(\gamma_2,0)}$ have no common tangent circles.
\end{lem}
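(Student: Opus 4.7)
The plan is to combine Lemma~\ref{prop:inters_circles_first_type}(ii) with a Möbius normalization and then reduce the counting to elementary $GF(q)$-linear algebra on $GF(q^2)$.

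First I would put the pair of carriers in a standard position. The linear Möbius transformation $z\mapsto z/\gamma_1$ fixes $\infty$ and hence preserves the type of every circle; it sends $B^2_{(\gamma_1,0)}$ to $B^2_{(1,0)}$ and $B^2_{(\gamma_2,0)}$ to $B^2_{(\delta,0)}$ with $\delta:=\gamma_2/\gamma_1$. Distinctness of the two circles forces $\delta\notin GF(q)$. Since $\gamma_1^2$ is always a square in $GF(q^2)$, the hypothesis ``$\gamma_1\gamma_2$ is a square in $GF(q^2)$'' is equivalent to ``$\delta$ is a square in $GF(q^2)$''; a one-line application of the cyclic structure of $GF(q^2)^*$ from Facts~\ref{facts}, using $\norm(\delta)^{(q-1)/2}=\delta^{(q^2-1)/2}$, shows that this is in turn equivalent to ``$\delta\bar\delta$ is a square in $GF(q)$''.

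Next I apply Lemma~\ref{prop:inters_circles_first_type}(ii) to each carrier. The circle $B^1_{(c,r)}$ is tangent to $B^2_{(1,0)}$ iff $4r=(c+\bar c)^2$, and tangent to $B^2_{(\delta,0)}$ iff $4\delta\bar\delta\,r=(\bar\delta c+\delta\bar c)^2$. Setting $s:=c+\bar c$ and $t:=\bar\delta c+\delta\bar c$ (both elements of $GF(q)$), the joint tangency condition becomes
\[
    \delta\bar\delta\,s^2 \;=\; t^2,
\]
and the nondegeneracy requirement $r\neq 0$ translates to $s\neq 0$. The $GF(q)$-linear functionals $c\mapsto s$ and $c\mapsto t$ on $GF(q^2)$ are linearly independent exactly because $\delta\notin GF(q)$, so $c\mapsto(s,t)$ is a $GF(q)$-linear bijection $GF(q^2)\to GF(q)^2$; hence counting first-type tangent circles reduces to counting admissible pairs $(s,t)\in GF(q)^2$.

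Finally I split into two cases. If $\delta\bar\delta=n^2$ with $n\in GF(q)\setminus\{0\}$, the equation factors as $(ns-t)(ns+t)=0$, giving two distinct lines $t=\pm ns$ which meet only at $(0,0)$ (since $p$ is odd). Their union contains $2q-1$ points; removing the single point with $s=0$ leaves $2(q-1)$ admissible pairs, each determining a unique $c$ and hence a unique $r$, so a distinct circle in $\tau$. If $\delta\bar\delta$ is a nonsquare, then $t^2=\delta\bar\delta\,s^2$ forces $s=0$ (else $\delta\bar\delta=(t/s)^2$ would be a square), which then forces $t=0$ and hence $c=0$, $r=0$, so no valid first-type tangent circle exists. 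The only bookkeeping obstacle is the degenerate case $s=0$ and taking care not to double-count the origin where the two lines meet; everything else is routine linear algebra over $GF(q)$.
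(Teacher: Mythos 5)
Your argument for circles of the first type is correct and, modulo presentation, runs parallel to the paper's: the paper also starts from the two tangency conditions of Lemma~\ref{prop:inters_circles_first_type}, eliminates $r$, and arrives at the condition $c^2\bar{\gamma}_1\bar{\gamma}_2\in GF(q)\setminus\{0\}$, which it then parametrizes as $c^2=\beta/(\bar{\gamma}_1\bar{\gamma}_2)$ with $\beta$ ranging over $GF(q)\setminus\{0\}$ to get the count $2(q-1)$. Your variant --- normalizing to $B^2_{(1,0)}$, $B^2_{(\delta,0)}$ and then counting via the $GF(q)$-linear bijection $c\mapsto(s,t)=(\trace(c),\trace(\bar{\delta}c))$ and the point count on the degenerate conic $t^2=\delta\bar{\delta}\,s^2$ --- is a clean alternative; it has the small advantage of making the nondegeneracy condition $r\neq 0$ (i.e.\ $s\neq 0$) and the forward implication (each admissible $(s,t)$ really yields a tangent circle) completely transparent, whereas the paper leaves those points implicit.

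There is, however, one omission. The set $\tau$ is by definition restricted to circles of the first type, so your count of $2(q-1)$ is fine as stated; but the second assertion of the lemma is that in the nonsquare case the carriers have \emph{no} common tangent circles at all, and your argument only rules out first-type ones. You still need to observe that no circle of the second type can be tangent to both carriers: by Lemma~\ref{prop:inters_circles_first_type}(iii), $B^2_{(c,r)}$ is tangent to $B^2_{(\gamma_i,0)}$ iff $c\bar{\gamma}_i-\bar{c}\gamma_i=0$, and the two conditions together force $c(\gamma_1\bar{\gamma}_2-\bar{\gamma}_1\gamma_2)=0$, hence $c=0$, which is not allowed. This is a one-line fix (and is exactly how the paper opens its proof), but without it the nonsquare half of the lemma is not fully established.
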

\begin{proof}
We start the proof by observing that there are no circles of the second 
type tangent to both $B^2_{(\gamma_1,0)}$ and $B^2_{(\gamma_2,0)}$. By 
Lemma~\ref{prop:inters_circles_first_type}, any such circle $B^2_{(c,r)}$ would satisfy
	\[
		c\bar{\gamma}_1 - \bar{c}\gamma_1 = 0
		\quad \text{and} \quad
		c\bar{\gamma}_2 - \bar{c}\gamma_2 = 0.
	\]
	Above condition leads to
	\[
		c(\gamma_1\bar{\gamma}_2 - \bar{\gamma}_1\gamma_2) = 0,
	\]
	but $c\ne 0$ for a circle of the second type, contradicting $\gamma_1\bar{\gamma}_2 - \bar{\gamma}_1\gamma_2\ne 0$.
	
	For any circle $B^1_{(c,r)}$ of the first type in $\tau(B^2_{(\gamma_1,0)},B^2_{(\gamma_2,0)})$ we have
	\begin{equation}\label{eq:condition_tangent_circle1}
		(c\bar{\gamma}_1 + \bar{c}\gamma_1)^2 = 4\gamma_1 \bar{\gamma}_1 r
	\end{equation}
	and
	\begin{equation}\label{eq:condition_tangent_circle2}
		(c\bar{\gamma}_2 + \bar{c}\gamma_2)^2 = 4\gamma_2 \bar{\gamma}_2 r
	\end{equation}
	as a consequence of Lemma~\ref{prop:inters_circles_first_type}. Notice that $4\gamma_i \bar{\gamma}_i r \ne 0$ because of the way the circles are defined, and since $p$ is odd by assumption. 
Eliminating $r=\frac{(c\bar{\gamma}_2 + \bar{c}\gamma_2)^2}{4\gamma_2\bar{\gamma}_2}$ from~\eqref{eq:condition_tangent_circle1} leads to
\[
	\gamma_2\bar{\gamma}_2 (c^2\bar{\gamma}_1^2 + \bar{c}^2\gamma_1^2)
	= \gamma_1\bar{\gamma}_1 (c^2\bar{\gamma}_2^2 + \bar{c}^2\gamma_2^2)
	\iff c^2\bar{\gamma}_1\bar{\gamma}_2\cdot(\bar{\gamma}_1\gamma_2 - \gamma_1\bar{\gamma}_2)
	= \bar{c}^2\gamma_1\gamma_2(\bar{\gamma}_1\gamma_2-\gamma_1\bar{\gamma}_2).
\]
Since $\gamma_1\bar{\gamma}_2 - \bar{\gamma}_1\gamma_2\ne 0$, this is equivalent to
	\[
		c^2 \bar{\gamma}_1\bar{\gamma}_2=\bar{c}^2 \gamma_1 \gamma_2,
	\]
	and thus any $c$ satisfying \eqref{eq:condition_tangent_circle1} and~\eqref{eq:condition_tangent_circle2} is characterized by the condition
	\[
		c^2 \bar{\gamma}_1\bar{\gamma}_2 \in GF(q),
		\qquad c\ne 0.
	\]
	This means that there must exists an element $\beta\in GF(q)\setminus\{0\}$ 
	such that $c^2 \bar{\gamma}_1\bar{\gamma}_2 = \beta$, or, equivalently,
	\begin{equation} \label{eq:condition_for_c}
		c^2 = \frac{\beta}{\bar{\gamma}_1 \bar{\gamma}_2}
		\quad \text{for } \beta\in GF(q)\setminus \{0\}.
	\end{equation}
	To see under what conditions such a $\beta$ exists, note the following:
	\begin{itemize}
		\item An element $\gamma\in GF(q^2)\setminus\{0\}$ is a square 
		in $GF(q^2)$ if and only if $\bar{\gamma}$ is a square. 
		Indeed, if there exists $a\in GF(q^2)$ with $a^2 = \gamma$, then $\bar{a}^2 = \bar{a^2} = \bar{\gamma}$. Conversely, if $a^2 = \bar{\gamma}$, it follows that $\bar{a}^2 = \gamma$.
		\item It is easy to verify that $\gamma\in GF(q^2)\setminus\{0\}$ is a square if and only if $\frac{1}{\gamma}$ is a square.
		\item If $\beta\in GF(q)\setminus\{0\}$ and $\gamma\in GF(q^2)\setminus\{0\}$, 
		then $\beta\gamma$ is a square if and only if $\gamma$ is a square. 
	\end{itemize}
	To sum up, \eqref{eq:condition_for_c} can be solved for $c$ if and only if $\gamma_1\gamma_2$ is a square in 
	$GF(q^2)$. In this case, $c$ is given by
	\begin{equation}\label{eq:solution_for_c}
		c = \pm \frac{\sqrt{\beta}}{\sqrt{\bar{\gamma}_1\bar{\gamma}_2}}.
	\end{equation}
	There are $q-1$ possible choices for $\beta\in GF(q)\setminus\{0\}$, and thus $2(q-1)$ different values $\pm\sqrt{\beta}$ can attain. Since there is a unique $r$ corresponding to every $c$, 
	there are exactly $2(q-1)$ circles in $\tau(B^2_{(\gamma_1,0)},B^2_{(\gamma_2,0)})$.
\end{proof}

From now on we assume that $\gamma_1\gamma_2$ is a square in $GF(q^2)$, 
i.e.\ $\tau(B^2_{(\gamma_1,0)},B^2_{(\gamma_2,0)})$ is non-empty.
Observe that $\gamma_1\gamma_2$ is a square if and only if both 
$\gamma_1$ and $\gamma_2$ are either squares or nonsquares. 
This also implies (together with what we mentioned in the proof of Lemma~\ref{lem:tangent_circles_for_intersecting_carrier_circles}) that $\gamma_1\gamma_2$ is a square if and only if $\frac{\bar{\gamma}_2}{\bar{\gamma}_1}$ is a square.

At this point, let us define $\gamma$ to be a square root of $\frac{\bar{\gamma}_2}{\bar{\gamma}_1}$:
\[
	\gamma := \sqrt{\frac{\bar{\gamma}_2}{\bar\gamma_1}},
\]
and let us apply the M\"obius transformation $z\mapsto \bar{\gamma}_1\gamma z$ to the carrier circles
\[
	B^2_{(\gamma_1,0)}\colon \bar{\gamma}_1 z + \gamma_1 \bar{z} = 0
	\quad \text{and} \quad
	B^2_{(\gamma_2,0)}\colon \bar{\gamma}_2 z + \gamma_2 \bar{z} = 0.
\]
$B^2_{(\gamma_1,0)}$ is transformed into
\[
	\frac{z}{\gamma} + \frac{\bar{z}}{\bar{\gamma}} = 0
	\iff \bar{\gamma}z + \gamma\bar{z} = 0,
\]
and for $B^2_{(\gamma_2,0)}$ we get
\[
	\frac{\bar{\gamma}_2}{\bar{\gamma}_1} \frac{z}{\gamma}
	+ \frac{\gamma_2}{\gamma_1} \frac{\bar{z}}{\bar{\gamma}} = 0
	\iff \gamma z + \bar{\gamma}\,\bar{z} = 0.
\]
We summarize what we have shown so far: If $\gamma_1\gamma_2$ is a nonsquare, 
no Steiner chain exists. But if $\gamma_1\gamma_2$ is a square, we can always 
transform the circles $B^2_{(\gamma_1,0)}$,$ B^2_{(\gamma_2,0)}$
 into the two symmetric circles 
$B^2_{(\gamma,0)}$ and $B^2_{(\bar{\gamma},0)}$, where $\gamma$ is defined as 
above. 
Notice that the condition $\gamma_1\bar{\gamma}_2 - \bar{\gamma}_1\gamma_2\ne 0$ changes to $\gamma^2\ne \bar{\gamma}^2$.

We will now state an explicit condition for a circle to be 
in $\tau(B^2_{(\gamma,0)},B^2_{(\bar{\gamma},0)})$.
\begin{lem}\label{lem:condition_symmetric_carrier_circles}
There are $2(q-1)$ circles tangent to $B^2_{(\gamma,0)}$ 
and $B^2_{(\bar\gamma,0)}$ with $\gamma^2\ne \bar{\gamma}^2$. 
They are given by $B^1_{(c,r)}$ with $c$ and $r$ satisfying
	\begin{equation}\label{lem:eq:first_circle_condition_1}
		c = \bar{c}, \qquad
		r = c^2 \frac{(\gamma + \bar{\gamma})^2}{4\gamma\bar{\gamma}}
	\end{equation}
	or
	\begin{equation}\label{lem:eq:first_circle_condition_2}
		c = -\bar{c}, \qquad
		r = c^2 \frac{(\gamma - \bar{\gamma})^2}{4\gamma\bar{\gamma}}
	\end{equation}
	for $c\in GF(q^2)\setminus \{0\}$.
\end{lem}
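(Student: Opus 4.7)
The plan is to specialize Lemma~\ref{prop:inters_circles_first_type}(ii) to the pair of carrier circles $B^2_{(\gamma,0)}$ and $B^2_{(\bar\gamma,0)}$, in complete analogy with what was already done in the proof of Lemma~\ref{lem:tangent_circles_for_intersecting_carrier_circles}. A circle $B^1_{(c,r)}$ of the first type lies in $\tau(B^2_{(\gamma,0)},B^2_{(\bar\gamma,0)})$ precisely when
\begin{equation*}
(c\bar\gamma+\bar c\gamma)^2=4\gamma\bar\gamma\,r\qquad\text{and}\qquad(c\gamma+\bar c\bar\gamma)^2=4\gamma\bar\gamma\,r.
\end{equation*}
(One should also recall from the proof of the previous lemma that no circle of the second type is common-tangent to the two carrier circles.)

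Next I would eliminate $r$ by equating the two left-hand sides and factor the resulting identity as a difference of squares:
\begin{equation*}
\bigl[(c-\bar c)(\bar\gamma-\gamma)\bigr]\cdot\bigl[(c+\bar c)(\gamma+\bar\gamma)\bigr]=0.
\end{equation*}
The hypothesis $\gamma^2\ne\bar\gamma^2$ means that neither $\gamma+\bar\gamma$ nor $\gamma-\bar\gamma$ vanishes, so the two relevant factors are $c-\bar c$ and $c+\bar c$; hence $c=\bar c$ or $c=-\bar c$. Plugging each alternative back into the first tangency equation and solving for $r$ produces exactly the two parametrizations~\eqref{lem:eq:first_circle_condition_1} and~\eqref{lem:eq:first_circle_condition_2}.

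Finally I would count: the set $\{c\in GF(q^2):c=\bar c\}$ is $GF(q)$, contributing $q-1$ nonzero values, and the trace-zero subspace $\{c:c=-\bar c\}$ is a one-dimensional $GF(q)$-subspace of $GF(q^2)$, again contributing $q-1$ nonzero values. Because $p$ is odd, the only $c$ satisfying both $c=\bar c$ and $c=-\bar c$ is $c=0$, so the two families are disjoint. Since $\gamma\pm\bar\gamma\ne 0$, the formulas for $r$ never produce $r=0$, so every such pair $(c,r)$ genuinely defines a first-type circle. This yields $2(q-1)$ distinct circles, matching the count from Lemma~\ref{lem:tangent_circles_for_intersecting_carrier_circles} and therefore accounting for \emph{all} common tangent circles. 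There is no serious obstacle here; the only point that warrants care is the double use of $\gamma^2\ne\bar\gamma^2$, first to legitimize the factorization step and second to guarantee that both branches produce nonzero $r$.
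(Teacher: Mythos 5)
Your proposal is correct and follows essentially the same route as the paper: specialize Lemma~\ref{prop:inters_circles_first_type} to get the two tangency equations, eliminate $r$, factor the difference (the paper writes it as $(c^2-\bar c^2)(\bar\gamma^2-\gamma^2)=0$, which is the same factorization you give), invoke $\gamma^2\ne\bar\gamma^2$ to conclude $c=\pm\bar c$, and substitute back for $r$. Your explicit count of the two disjoint families of $q-1$ nonzero values of $c$ is a small but welcome addition the paper leaves implicit via Lemma~\ref{lem:tangent_circles_for_intersecting_carrier_circles}.
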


\begin{proof}
By Lemma~\ref{prop:inters_circles_first_type}, the condition 
for a circle $B^1_{(c,r)}$ to be in $\tau(B^2_{(\gamma,0)},B^2_{(\bar{\gamma},0)})$ is
	\begin{equation}\label{eq:condition_c_symmetric_circles}
		\left.
			\begin{aligned}
				(c\bar{\gamma} + \bar{c}\gamma)^2 & = 4\gamma\bar{\gamma}r
				\\
				(c\gamma + \bar{c}\,\bar{\gamma})^2 & = 4\gamma\bar{\gamma}r
			\end{aligned}
		\ \right\}
	\end{equation}
	We subtract the second equation in~\eqref{eq:condition_c_symmetric_circles} from the first and get
	\[
		(c^2 - \bar{c}^2)(\bar{\gamma}^2 - \gamma^2) = 0.
	\]
	Since $\gamma^2\ne \bar{\gamma}^2$, this implies
	\[
		c^2 - \bar{c}^2 = (c - \bar{c})(c + \bar{c}) = 0.
	\]
	Plugging in the respective values $\bar{c} = c$ and $\bar{c} = -c$ in~\eqref{eq:condition_c_symmetric_circles} 
	yields the $r$-values specified in the lemma. We also see that $c$ is nonzero, as $c=0$ would lead to $r = 0$.
\end{proof}
We established in Lemma~\ref{lem:condition_symmetric_carrier_circles} that 
the center $c_1$ of any circle $B^1_{(c_1,r_1)}$ tangent to both carrier
 circles is either on the circle $z-\bar{z} = 0$ (i.e.\ $c\in GF(q)$) 
 or on the circle $z+\bar{z} = 0$. Accordingly, we subsequently 
 investigate what the conditions are for a second 
 circle $B^1_{(c_2,r_2)}\in \tau(B^2_{(\gamma,0)},B^2_{(\bar{\gamma},0)})$ 
 to be tangent to $B^2_{(c_1,r_1)}$ if
\begin{itemize}
	\item both $c_1$ and $c_2$ are on $z-\bar{z} = 0$ (see Lemma~\ref{lem:second_circle_1}),
	\item both $c_1$ and $c_2$ are on $z + \bar{z} = 0$ (see Lemma~\ref{lem:second_circle_2}), and
	\item $c_1$ and $c_2$ are not on the same line (see Lemma~\ref{lem:second_circle_3}).
\end{itemize}
\begin{lem}\label{lem:second_circle_1}
	Let $B^1_{(c_1,r_1)},B^1_{(c_2,r_2)}\in \tau(B^2_{(\gamma,0)},B^2_{(\bar{\gamma},0)})$ with
	\[
		c_1 = \bar{c}_1
		\quad \text{and} \quad
		c_2 = \bar{c}_2.
	\]
	The circles $B^1_{(c_1,r_1)}$ and $B^1_{(c_2,r_2)}$ are tangent if and only if 
	$\gamma\bar{\gamma}$ is a square in $GF(q)$ and
	\begin{equation}\label{lem:eq:second_circle_1}
	c_2 = c_1\cdot \frac{
		2\sqrt{\gamma\bar{\gamma}} \pm (\gamma + \bar{\gamma})}{
		2\sqrt{\gamma\bar{\gamma}} \mp (\gamma + \bar{\gamma})}.
	\end{equation}
\end{lem}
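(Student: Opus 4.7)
My plan is to apply the tangency criterion of Lemma~\ref{prop:inters_circles_first_type}(i) directly to $B^1_{(c_1,r_1)}$ and $B^1_{(c_2,r_2)}$, substituting the explicit expressions for $r_1, r_2$ supplied by Lemma~\ref{lem:condition_symmetric_carrier_circles}. I set $s := \gamma + \bar\gamma$ and $n := \gamma\bar\gamma$, both in $GF(q)$, so that $r_i = c_i^2 s^2/(4n)$ for $i = 1, 2$. Because $c_1, c_2 \in GF(q)$, the difference $c := c_2 - c_1$ lies in $GF(q)$, hence $c\bar c = (c_2-c_1)^2$ and the discriminant $D$ in Lemma~\ref{prop:inters_circles_first_type}(i) reduces to a polynomial expression over $GF(q)$.

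I then expand and factor $D = 0$. Writing $r_1 - r_2 = (c_1-c_2)(c_1+c_2)\,s^2/(4n)$, the quantity $c\bar c + r_1 - r_2$ factors as $(c_2-c_1)\bigl((c_2-c_1) - (c_1+c_2)s^2/(4n)\bigr)$, while $4c\bar c\,r_1 = (c_2-c_1)^2 c_1^2 s^2/n$. Pulling out the factor $(c_2-c_1)^2$, which is nonzero because distinct first-type tangent circles have distinct centers in the parametrisation of Lemma~\ref{lem:condition_symmetric_carrier_circles} (there $r$ is a function of $c$), the condition $D=0$ collapses to
\[
\bigl((c_2-c_1) - (c_1+c_2)\,s^2/(4n)\bigr)^2 \;=\; c_1^2\,s^2/n.
\]

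The decisive observation is now the following: the left-hand side is automatically a square in $GF(q)$, while the right-hand side equals $(c_1 s)^2 / n$. Since $(c_1 s)^2$ is a nonzero square, the equation is solvable in $c_2 \in GF(q)$ if and only if $1/n$ --- equivalently $n = \gamma\bar\gamma$ --- is a square in $GF(q)$, which is exactly the stated squareness hypothesis. Assuming this and writing $\nu := \sqrt{\gamma\bar\gamma}$, I take square roots and multiply through by $4\nu^2$ to obtain the linear equation $(4\nu^2 - s^2)\,c_2 = c_1\bigl(4\nu^2 + s^2 \pm 4s\nu\bigr) = c_1\,(2\nu \pm s)^2$. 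Invoking the factorization $4\nu^2 - s^2 = (2\nu-s)(2\nu+s)$ and cancelling one factor yields the claimed formula $c_2 = c_1(2\nu \pm s)/(2\nu \mp s)$.

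The remaining technical points, which I would handle briefly at the end, are that $s \neq 0$ and that the denominators $2\nu \mp s$ are nonzero; both follow at once from the hypothesis $\gamma^2 \neq \bar\gamma^2$, since this factors as $(\gamma-\bar\gamma)(\gamma+\bar\gamma) \neq 0$ and $4\nu^2 = s^2$ would entail $(\gamma - \bar\gamma)^2 = 0$. The real content of the proof is the squareness criterion on $\gamma\bar\gamma$ obtained in the third paragraph; everything else is bookkeeping, the mildest subtlety being the sign pairing in the final quotient, which is forced by the algebraic identity $(2\nu \pm s)^2/(4\nu^2 - s^2) = (2\nu \pm s)/(2\nu \mp s)$.
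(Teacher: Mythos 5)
Your proof is correct and follows essentially the same route as the paper's: apply the tangency discriminant of Lemma~\ref{prop:inters_circles_first_type}(i), substitute $r_i = c_i^2(\gamma+\bar\gamma)^2/(4\gamma\bar\gamma)$ from Lemma~\ref{lem:condition_symmetric_carrier_circles}, observe that the resulting equation equates a perfect square in $GF(q)$ with a quantity that is a square precisely when $\gamma\bar\gamma$ is, and then solve the linear equation for $c_2$. The only cosmetic difference is that the paper extracts $\sqrt{r_1}$ while you work with $\nu=\sqrt{\gamma\bar\gamma}$ and factor $(2\nu\pm s)^2/(4\nu^2-s^2)$; the nonvanishing checks ($s\neq0$, $4\nu^2\neq s^2$, $c_2\neq c_1$) are handled the same way via $\gamma^2\neq\bar\gamma^2$.
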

\begin{proof}
Recall that both circles $B^1_{(c_1,r_1)}$ and $B^1_{(c_2,r_2)}$ satisfy 
equation~\eqref{lem:eq:first_circle_condition_1} from 
Lemma~\ref{lem:condition_symmetric_carrier_circles}, namely:
	\begin{equation}\label{proof:eq:local0}
		c_i = \bar{c}_i, \quad
		r_i = c_i^2\frac{(\gamma + \bar{\gamma})^2}{
			4\gamma\bar{\gamma}}, \qquad
		c_i \ne 0,
		\quad i = 1,2.
	\end{equation}
	Moreover, because they are mutually tangent, we also have
	\begin{equation}\label{proof:eq:local1}
		(c\bar{c} + r_1 - r_2)^2 = 4c\bar{c}r_1
		\quad \text{for }
		c := c_2 - c_1
	\end{equation}
	by Lemma~\ref{prop:inters_circles_first_type}. Notice that $c\in GF(q)$, and therefore $c\bar{c} = c^2$.
	Let us write $r_2$ as
	\[
		r_2 = \frac{c_2^2}{c_1^2}r_1 = \left( \frac{c}{c_1} + 1\right)^2 r_1
	\]
	and apply it to equation~\eqref{proof:eq:local1}:
	\[
		\left(
			\left(
				\frac{c}{c_1} + 1
			\right)^2 r_1 - r_1 - c^2
		\right)^2
		= 4c^2 r_1
		\iff \left(
			\left(
				\frac{c^2}{c_1^2} + \frac{2c}{c_1}
			\right) r_1 - c^2
		\right)^2
		= 4c^2 r_1.
	\]
	Dividing both sides by $c^2$, which is nonzero because $B^1_{(c_1,r_1)}$ and $B^1_{(c_2,r_2)}$ are different, yields
	\begin{equation}\label{proof:eq:local2}
		\left(
			c\,\frac{r_1 - c_1^2}{c_1^2} + \frac{2r_1}{c_1}
		\right)^2 = 4r_1.
	\end{equation}
	Notice that
	$c\ \frac{r_1 - c_1^2}{c_1^2} + \frac{2r_1}{c_1} \in GF(q)$, since $c,r_1,c_1\in GF(q)$. 
	Consequently, \eqref{proof:eq:local2} only has a solution if $r_1$ is a square in $GF(q)$. 
	A look at equation~\eqref{proof:eq:local0} makes it clear that $r_1$ is a square in $GF(q)$ 
	if and only if $\gamma\bar{\gamma}$ is a square in $GF(q)$.
	In that case we can write equation~\eqref{proof:eq:local2} as
	\begin{equation}\label{proof:eq:local3}
		c\ \frac{r_1 - c_1^2}{c_1^2} = \pm 2\sqrt{r_1} - \frac{2r_1}{c_1}.
	\end{equation}
	At this point we observe that $r_1 - c_1^2\ne 0$, i.e.\ $\frac{(\gamma + \bar{\gamma})^2}{4\gamma\bar{\gamma}}\ne 1$. In fact,
	\[
		(\gamma+\bar{\gamma})^2 = 4\gamma\bar{\gamma}
		\iff \gamma^2 - 2\gamma\bar{\gamma} + \bar{\gamma}^2 = 0
		\iff (\gamma - \bar{\gamma})^2 = 0
		\iff \gamma = \bar{\gamma},
	\]
	but as we mentioned earlier, $\gamma^2 \ne \bar{\gamma}^2$. We can therefore rearrange~\eqref{proof:eq:local3} by solving for $c$:
	\[
		c = \frac{c_1^2}{r_1-c_1^2}
		\left(\pm 2\sqrt{r_1} - \frac{2r_1}{c_1}\right)
		= \frac{2 c_1\sqrt{r_1} (\pm c_1-\sqrt{r_1})}{
			(c_1 - \sqrt{r_1})(-c_1 - \sqrt{r_1})}.
	\]
	We use that $c_2 = c+c_1$ and get
	\[
		c_2 = \frac{2 c_1 \sqrt{r_1} + c_1(\mp c_1 - \sqrt{r_1})}{
			\mp c_1 - \sqrt{r_1}}
		= c_1\frac{\sqrt{r_1}\mp c_1}{-\sqrt{r_1}\mp c_1}
		= c_1\frac{c_1\mp\sqrt{r_1}}{c_1\pm\sqrt{r_1}}.
	\]
	Finally, substituting $r_1$ gives us
	\[
		c_2 = c_1 \frac{c_1 \mp 
			c_1\frac{\gamma + \bar{\gamma}}{2\sqrt{\gamma\bar{\gamma}}}
		}{ c_1 \pm
			c_1\frac{\gamma + \bar{\gamma}}{2\sqrt{\gamma\bar{\gamma}}}
		}
		= c_1 \frac{2\sqrt{\gamma\bar{\gamma}} \mp (\gamma + \bar{\gamma})}{
			2\sqrt{\gamma\bar{\gamma}} \pm (\gamma + \bar{\gamma})}.
	\]
\end{proof}
\begin{lem}\label{lem:second_circle_2}
	Let $B^1_{(c_1,r_1)},B^1_{(c_2,r_2)}\in \tau(B^2_{(\gamma,0)},B^2_{(\bar{\gamma},0)})$ with
	\[
	c_1 = -\bar{c}_1
	\quad \text{and} \quad
	c_2 = -\bar{c}_2.
	\]
	The circles $B^1_{(c_1,r_1)}$ and $B^1_{(c_2,r_2)}$ are tangent if and 
	only if 
	$-\gamma\bar{\gamma}$ is a nonsquare in $GF(q)$ and
	\begin{equation}\label{lem:eq:second_circle_2}
		c_2 = c_1\cdot \frac{
			2\sqrt{-\gamma\bar{\gamma}} \pm (\gamma - \bar{\gamma})}{
			2\sqrt{-\gamma\bar{\gamma}} \mp (\gamma - \bar{\gamma})}.
	\end{equation}
\end{lem}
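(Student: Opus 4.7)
The plan is to mirror the proof of Lemma~\ref{lem:second_circle_1}, with the single structural change that for $c := c_2 - c_1$ we now have $\bar c = -c$ and hence $c\bar c = -c^2$ rather than $+c^2$. All of the factoring manoeuvres in that earlier proof carry over almost verbatim; what changes are the signs of several key quantities, and consequently the squareness criterion.

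First I would apply condition~\eqref{lem:eq:first_circle_condition_2} of Lemma~\ref{lem:condition_symmetric_carrier_circles}, which supplies $c_i = -\bar c_i$ and $r_i = c_i^2 K$ with $K := (\gamma-\bar\gamma)^2/(4\gamma\bar\gamma)\in GF(q)$. Because $c_1$ and $c_2$ are both imaginary, their ratio lies in $GF(q)$, so again $r_2/r_1 = (c_2/c_1)^2$. Substituting into the tangency relation $(c\bar c + r_1 - r_2)^2 = 4c\bar c r_1$ from Lemma~\ref{prop:inters_circles_first_type}, which here reads $(-c^2 + r_1 - r_2)^2 = -4c^2 r_1$, and carrying out the same factoring as in the previous proof (pulling out the nonzero $c$), I expect to arrive at
\[
[c(c_1^2 + r_1) + 2 c_1 r_1]^2 = -4\,c_1^4\, r_1.
\]
Taking square roots and writing $c_2 = c_1 + c$, this gives
\[
c_2 \;=\; c_1 \cdot \frac{c_1 \pm \sqrt{-r_1}}{c_1 \mp \sqrt{-r_1}},
\]
which is the direct analogue of the key identity in the proof of Lemma~\ref{lem:second_circle_1}, with $\sqrt{r_1}$ replaced by $\sqrt{-r_1}$.

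Next I would determine precisely when this expression gives a valid $c_2$ satisfying $\bar c_2 = -c_2$. Since $c_1$ is imaginary, the quotient $(c_1 \pm \sqrt{-r_1})/(c_1 \mp \sqrt{-r_1})$ belongs to $GF(q)$ exactly when $\sqrt{-r_1}$ is also imaginary, i.e.\ when $-r_1$ is a nonsquare in $GF(q)$. Invoking Facts~\ref{facts}, both $c_1^2$ and $(\gamma-\bar\gamma)^2$ are nonsquares in $GF(q)$, as squares of imaginary elements (each equals a square in $GF(q)$ times a fixed nonsquare $\alpha$). Plugging $r_1 = c_1^2 K$ into the nonsquare test for $-r_1$ and cancelling these two nonsquare factors, the condition cleanly reduces to $-\gamma\bar\gamma$ being a nonsquare in $GF(q)$, which is the stated hypothesis.

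Finally, under this hypothesis, I would identify
\[
\sqrt{-K} \;=\; \frac{\gamma-\bar\gamma}{2\sqrt{-\gamma\bar\gamma}} \;\in\; GF(q),
\]
as a quotient of two imaginary elements, and use $\sqrt{-r_1} = c_1\sqrt{-K}$ to rewrite
\[
c_2 \;=\; c_1 \cdot \frac{1 \pm \sqrt{-K}}{1 \mp \sqrt{-K}} \;=\; c_1 \cdot \frac{2\sqrt{-\gamma\bar\gamma} \pm (\gamma-\bar\gamma)}{2\sqrt{-\gamma\bar\gamma} \mp (\gamma-\bar\gamma)},
\]
recovering~\eqref{lem:eq:second_circle_2}. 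The main obstacle I anticipate is the squareness book-keeping: because the sign of $c\bar c$ flips, several quantities that were squares in the proof of Lemma~\ref{lem:second_circle_1} become nonsquares here, and one has to keep careful track of the compensating factor of $\alpha$ in order to see that the criterion changes from ``$\gamma\bar\gamma$ is a square'' to ``$-\gamma\bar\gamma$ is a nonsquare''.
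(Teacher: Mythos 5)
Your proposal is correct and follows essentially the same route as the paper's proof: reduce via Lemma~\ref{lem:condition_symmetric_carrier_circles} and the tangency condition of Lemma~\ref{prop:inters_circles_first_type} to $\bigl(c(r_1+c_1^2)+2c_1r_1\bigr)^2=-4c_1^4r_1$, note that the left-hand side is the square of a ``purely imaginary'' element so that $-r_1$ must be a nonsquare in $GF(q)$ (equivalently, $-\gamma\bar{\gamma}$ a nonsquare), and then solve for $c_2$. The only detail the paper supplies that you omit is the verification $r_1+c_1^2\neq 0$ (equivalently $(\gamma+\bar{\gamma})^2\neq 0$, which follows from $\gamma^2\neq\bar{\gamma}^2$), needed before dividing by $r_1+c_1^2$ to solve for $c$.
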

\begin{proof}
Recall that both $B^1_{(c_1,r_1)}$ and $B^1_{(c_2,r_2)}$ must satisfy 
equation~\eqref{lem:eq:first_circle_condition_2} from 
Lemma~\ref{lem:condition_symmetric_carrier_circles}:
		\[
			c_i = -\bar{c}_i, \quad
			r_i = c_i^2\ \frac{(\gamma - \bar{\gamma})^2}{
				4\gamma\bar{\gamma}}, \qquad
			c_i \ne 0,
			\quad i = 1,2.
		\]
	Moreover, because they are tangent, we have
	\begin{equation}\label{proof:eq:local4}
		(c\bar{c} + r_1 - r_2)^2 = 4c\bar{c}r_1
		\quad \text{for }
		c := c_2 - c_1
	\end{equation}
	by Lemma~\ref{prop:inters_circles_first_type}. Notice that $c\bar{c} = -c^2$. Let us write $r_2$ as
	\[
		r_2 = \frac{c_2^2}{c_1^2}r_1 = \left( \frac{c}{c_1} + 1\right)^2 r_1.
	\]
	Equation~\eqref{proof:eq:local4} now reads
	\[
		\left(
			\left(
				\frac{c}{c_1} + 1
			\right)^2 r_1 - r_1 + c^2
		\right)^2
		= -4c^2 r_1,
	\]
	or, equivalently,
	\begin{equation}\label{proof:eq:local5}
		\left(
			c \frac{r_1+c_1^2}{c_1^2} + \frac{2r_1}{c_1}
		\right)^2
		= -4 r_1,
	\end{equation}
	where we used that $c\ne 0$ (because $B^1_{(c_1,r_1)}$ and $B^1_{(c_2,r_2)}$ are different). 
	We have a closer look at equation~\eqref{proof:eq:local5}. For this, define
	\[
		\iota := c\  \frac{r_1+c_1^2}{c_1^2} + \frac{2r_1}{c_1}.
	\]
	Observe that $\bar{\iota} = -\iota$, which means that $\iota$ is on the circle 
	$z+\bar{z} = 0$. This implies that in order for~\eqref{proof:eq:local5} to be 
	solvable, we need the square root of $-4r_1$ to be on that circle as well. 
	Since $-4r_1\in GF(q)$, the square root always exists in $GF(q^2)$, 
	and we conclude that $-r_1$ must be a 
	nonsquare in $GF(q)$. If we write $\sqrt{-r_1}$ as
	\[
		\sqrt{-r_1} = c_1\frac{\gamma - \bar{\gamma}}{
			2\sqrt{-\gamma\bar{\gamma}}},
	\]
	it becomes clear that $\bar{\sqrt{-r_1}} = -\sqrt{-r_1}$ if and 
	only if $-\gamma\bar{\gamma}$ is a nonsquare in $GF(q)$. In this case, 
	we can solve equation~\eqref{proof:eq:local5} for $c$:
	\begin{equation}\label{proof:eq:local6}
		c = \frac{c_1^2}{r_1+c_1^2} \left(
			\pm 2\sqrt{-r_1} - \frac{2r_1}{c_1}
		\right).
	\end{equation}
	We should also mention here that $r_1 + c_1^2 \ne 0$, i.e.\ 
	$\frac{(\gamma - \bar{\gamma})^2}{4\gamma\bar{\gamma}}\ne -1$. 
	This follows from the condition $\gamma^2\ne \bar{\gamma}^2$, because
	\[
		(\gamma - \bar{\gamma})^2 = -4\gamma\bar{\gamma}
		\iff \gamma^2 + 2 \gamma\bar{\gamma} + \bar{\gamma}^2 = 0
		\iff (\gamma + \bar{\gamma})^2 = 0.
	\]
	We further simplify \eqref{proof:eq:local6} by using the relation $c_2 = c + c_1$:
	\begin{align*}
		c_2 &= \frac{\pm 2c_1^2\sqrt{-r_1} - 2c_1 r_1}{r_1 + c_1^2} + c_1 
		= \frac{-2c_1\sqrt{-r_1} (\pm c_1 + \sqrt{-r_1})}{
			(\sqrt{-r_1} - c_1)(\sqrt{-r_1} + c_1)} + c_1 \\[0.7ex]
		&= \frac{-2c_1\sqrt{-r_1} + c_1(\sqrt{-r_1} \mp c_1)}{
			\sqrt{-r_1} \mp c_1}
		= c_1 \frac{\mp c_1 - \sqrt{-r_1}}{\mp c_1 + \sqrt{-r_1}} \\[0.7ex]
		&= c_1 \frac{c_1 \pm \sqrt{-r_1}}{c_1 \mp \sqrt{-r_1}}.
	\end{align*}
	We conclude the proof by plugging in the term for $\sqrt{-r_1}$:
	\[
		c_2 = c_1 \, \frac{1 \pm 
			\frac{\gamma - \bar{\gamma}}{2\sqrt{-\gamma\bar{\gamma}}}
			}{1 \mp
			\frac{\gamma - \bar{\gamma}}{2\sqrt{-\gamma\bar{\gamma}}}
			}
		= c_1\, \frac{
			2\sqrt{-\gamma\bar{\gamma}} \pm (\gamma - \bar{\gamma})}{
			2\sqrt{-\gamma\bar{\gamma}} \mp (\gamma - \bar{\gamma})}.
	\]
\end{proof}
\begin{lem}\label{lem:second_circle_3}
	Let $B^1_{(c_1,r_1)},B^1_{(c_2,r_2)}\in \tau(B^2_{(\gamma,0)},B^2_{(\bar{\gamma},0)})$ with
	\[
	c_1 = \bar{c}_1
	\quad \text{and} \quad
	c_2 = -\bar{c}_2.
	\]
	The circles $B^1_{(c_1,r_1)}$ and $B^1_{(c_2,r_2)}$ are tangent if and only if
	\[
		c_2 = \pm c_1\cdot \frac{\gamma - \bar{\gamma}}{
			\gamma + \bar{\gamma}}.
	\]
\end{lem}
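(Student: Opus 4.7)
My plan is to apply the tangency criterion from Lemma~\ref{prop:inters_circles_first_type}(i) directly to the two circles and to exploit the clean structure that the hypotheses $c_1 = \bar c_1$, $c_2 = -\bar c_2$ impose on the quantities $c\bar c$ and $r_i$ supplied by Lemma~\ref{lem:condition_symmetric_carrier_circles}.

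First I would compute, for $c := c_2 - c_1$,
\[
c\bar c = (c_2 - c_1)(\bar c_2 - \bar c_1) = (c_2 - c_1)(-c_2 - c_1) = c_1^2 - c_2^2,
\]
and write the radii as $r_1 = c_1^2 A$ and $r_2 = c_2^2 B$ with
\[
A := \frac{(\gamma+\bar\gamma)^2}{4\gamma\bar\gamma}, \qquad B := \frac{(\gamma-\bar\gamma)^2}{4\gamma\bar\gamma},
\]
according to the two branches of Lemma~\ref{lem:condition_symmetric_carrier_circles}. The key observation is the identity $A - B = 1$, which simplifies $c\bar c + r_1 - r_2$ to $c_1^2 + (c_1^2 - c_2^2)A$.

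I then plug into the tangency condition $(c\bar c + r_1 - r_2)^2 = 4 c\bar c r_1$ and expect it to collapse, after expansion, to a perfect square
\[
\bigl(c_1^2 - (c_1^2 - c_2^2)A\bigr)^2 = 0,
\]
equivalent to $c_1^2 = (c_1^2 - c_2^2) A$, i.e. $(c_2/c_1)^2 = (A-1)/A = B/A = (\gamma-\bar\gamma)^2/(\gamma+\bar\gamma)^2$. Taking square roots—which is unproblematic here because we stay entirely in $GF(q^2)$ and no extraction from $GF(q)$ is required—yields the claimed formula $c_2 = \pm c_1\,(\gamma - \bar\gamma)/(\gamma + \bar\gamma)$. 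Since every step is an equivalence, both directions of the iff drop out together. I take this unconditional availability of square roots to be the structural reason why, in contrast with Lemmas~\ref{lem:second_circle_1} and~\ref{lem:second_circle_2}, no square/nonsquare hypothesis on $\gamma\bar\gamma$ appears in the statement.

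The main thing to be careful about is the side conditions rather than the algebra itself. The denominator $\gamma + \bar\gamma$ is nonzero because $\gamma + \bar\gamma = 0$ would give $\bar\gamma^2 = \gamma^2$, contradicting the standing hypothesis $\gamma^2 \neq \bar\gamma^2$. The two circles are distinct, i.e.\ $c_1 \neq c_2$, since $c_1 = c_2$ combined with $c_2 = -\bar c_2 = -\bar c_1 = -c_1$ would force $2c_1 = 0$, contradicting $c_1 \neq 0$ (recall that $p$ is odd). Once these are noted, the computation above is valid and yields the lemma.
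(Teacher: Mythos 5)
Your proposal is correct and follows essentially the same route as the paper's proof: compute $c\bar{c}=c_1^2-c_2^2$, use the relation $r_2=c_2^2\bigl(\tfrac{r_1}{c_1^2}-1\bigr)$ (your identity $A-B=1$), and recognize the tangency expression as the perfect square $\bigl(c_1^2-(c_1^2-c_2^2)A\bigr)^2$, which matches the paper's $\bigl(\bigl(\tfrac{c_2^2}{c_1^2}-1\bigr)r_1+c_1^2\bigr)^2$ exactly. Your use of $(X+Y)^2-4XY=(X-Y)^2$ just shortcuts the expansion, and your side-condition checks ($\gamma+\bar{\gamma}\neq 0$, distinctness of the circles) are correct additions.
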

\begin{proof}
	By Lemma~\ref{lem:condition_symmetric_carrier_circles} we have
	\[
		r_1 = c_1^2\,\frac{(\gamma + \bar{\gamma})^2}{
			4\gamma\bar{\gamma}}
		\quad \text{and} \quad
		r_2 = c_2^2\,\frac{(\gamma - \bar{\gamma})^2}{
			4\gamma\bar{\gamma}}.
	\]
	We can write $r_2$ as
	\[
		r_2 = c_2^2\left(
			\frac{(\gamma + \bar{\gamma})^2}{4\gamma\bar{\gamma}}
			- \frac{4\gamma\bar{\gamma}}{4\gamma\bar{\gamma}}
		\right)
		= c_2^2 \left(
			\frac{r_1}{c_1^2} - 1
		\right).
	\]
	Furthermore, for $c:= c_2 - c_1$ we have
	\[
		c\bar{c} = (c_2 - c_1)(-c_2 - c_1) = c_1^2 - c_2^2.
	\]
	We use these observations to transform the equation $(c\bar{c} + r_1 - r_2)^2 = 4c\bar{c}r_1$ for two tangent circles of the first type (see Lemma~\ref{prop:inters_circles_first_type}). We find that
	\begin{align*}
		(c\bar{c} + r_1 - r_2)^2 - 4c\bar{c}r_1
		&= \left(
			c_2^2\left( \frac{r_1}{c_1^2} - 1 \right)
			- r_1 + c_2^2 - c_1^2 \right)^2
			- 4(c_1^2 - c_2^2)r_1 \\
		&= \left(
			\left(\frac{c_2^2}{c_1^2} - 1\right)r_1 - c_1^2 \right)^2
			+ 4(c_2^2 - c_1^2)r_1 \\
		&= \left(\frac{c_2^2}{c_1^2} - 1 \right)^2 r_1^2
			+ 2r_1(c_2^2 - c_1^2) + c_1^4 \\
		&= \left(
			\left(\frac{c_2^2}{c_1^2} - 1\right)r_1 + c_1^2
			\right)^2,
	\end{align*}
	where the last term is zero if and only if
	\[
		(c_2^2 - c_1^2)r_1 + c_1^4 = 0,
	\]
	which is equivalent to
	\[
		c_2^2 = c_1^2 \left(1 - \frac{c_1^2}{r_1} \right).
	\]
	The desired result now follows from the fact that
	\[
		1-\frac{c_1^2}{r_1}
		= 1 - \frac{4\gamma\bar{\gamma}}{(\gamma + \bar{\gamma})^2}
		= \frac{(\gamma - \bar{\gamma})^2}{(\gamma + \bar{\gamma})^2}.
	\]
\end{proof}
Let us make a few comments about what we just proved in Lemmas~\ref{lem:second_circle_1}--\ref{lem:second_circle_3}:
\begin{itemize}
	\item
		The case where $c_1 = -\bar{c}_1$ and $c_2 = \bar{c}_2$ can immediately be derived from Lemma~\ref{lem:second_circle_3} by interchanging $c_1$ and $c_2$.
	\item
		In all three lemmas, the condition allows for exactly two circles $B^1_{(c_2,r_2)}$ tangent to $B^1_{(c_1,r_1)}$.
	\item
		In Lemma~\ref{lem:second_circle_1} we obtain $c_2$ from $c_1$ by multiplying $c_1$ with an element $u\in GF(q)$ (which the reader may easily verify by calculating the conjugate of $u$). The same is true for Lemma~\ref{lem:second_circle_2}.
	\item
		It does not matter which square root we choose for $\gamma\bar{\gamma}$ or for $-\gamma\bar{\gamma}$; the equations in Lemma~\ref{lem:second_circle_1} and Lemma~\ref{lem:second_circle_2} stay the same.
	\item
		The radii of $B^1_{(c_1,r_1)}$ and $B^1_{(c_2,r_2)}$ in each case are uniquely 
		determined by $c_1$ and $c_2$, respectively (see Lemma~\ref{lem:condition_symmetric_carrier_circles}).
\end{itemize}
The following corollary is an important observation about the 
restriction on $\gamma$ as given in Lemmas~\ref{lem:second_circle_1} and~\ref{lem:second_circle_2}.
\begin{cor}\label{cor:gamma_square}
	\begin{enumerate}
		\item \label{cor:itm:square_equivalence_1}
		$\gamma\bar{\gamma}$ is a square in $GF(q)$ if and only if $\gamma$ is a square in $GF(q^2)$.
		
		\item \label{cor:itm:square_equivalence_2}
		$-\gamma\bar{\gamma}$ is a nonsquare in $GF(q)$ if and only if either
		\begin{itemize}
			\item
			$\gamma$ is a square in $GF(q^2)$ and $-1$ is a nonsquare in $GF(q)$, or
			\item
			$\gamma$ is a nonsquare in $GF(q^2)$ and $-1$ is a square in $GF(q)$.
		\end{itemize}
	\end{enumerate}
\end{cor}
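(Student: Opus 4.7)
The plan is to reduce everything to the Euler-type criterion on the cyclic group $GF(q^2)^*$, using the facts already collected in Facts~\ref{facts}. The key observation is that since $\bar\gamma=\gamma^q$, we have
\[
  \gamma\bar\gamma \;=\; \gamma^{q+1},
\]
so $\gamma\bar\gamma$ is automatically an element of $GF(q)$ (as it must be, in order even to ask whether it is a square in $GF(q)$). This single identity will carry the whole argument.

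For part~\ref{cor:itm:square_equivalence_1}, the plan is to apply the Euler criterion in $GF(q)$ to the element $\gamma\bar\gamma$. Since $\gamma\bar\gamma\ne 0$, it is a square in $GF(q)$ iff
\[
  (\gamma\bar\gamma)^{(q-1)/2} \;=\; \gamma^{(q+1)(q-1)/2} \;=\; \gamma^{(q^2-1)/2} \;=\; 1,
\]
and the right-hand identity is exactly the Euler criterion characterizing squares in $GF(q^2)^*$. This gives the equivalence at once.

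For part~\ref{cor:itm:square_equivalence_2}, the plan is to combine part~\ref{cor:itm:square_equivalence_1} with the multiplicative rule on squares and nonsquares in $GF(q)$. Writing $-\gamma\bar\gamma=(-1)\cdot(\gamma\bar\gamma)$, the product is a nonsquare in $GF(q)$ if and only if exactly one of the two factors $-1$ and $\gamma\bar\gamma$ is a nonsquare in $GF(q)$. Translating the condition on $\gamma\bar\gamma$ via part~\ref{cor:itm:square_equivalence_1} (noting that $\gamma\bar\gamma$ is a nonsquare in $GF(q)$ exactly when $\gamma$ is a nonsquare in $GF(q^2)$), one obtains the two cases stated. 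Because the whole argument amounts to reading off Euler's criterion and applying the sign rule for squares, I do not anticipate any genuine obstacle; the only thing to watch is that we are comparing squareness in two different fields, and the identity $\gamma\bar\gamma=\gamma^{q+1}$ is precisely what lets those two notions communicate through the exponent $(q-1)/2$ versus $(q^2-1)/2$.
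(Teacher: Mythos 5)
Your proof is correct and follows essentially the same route as the paper: part (i) is exactly the paper's computation $(\gamma\bar\gamma)^{(q-1)/2}=\gamma^{(q^2-1)/2}$ via the norm identity $\gamma\bar\gamma=\gamma^{q+1}$, and part (ii) is the same appeal to the multiplicative sign rule from Facts~\ref{facts} that the paper invokes. No issues.
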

\begin{proof}
	Recall that an element $b\in GF(q)\setminus\{0\}$ is a square in $GF(q)$ if and 
	only if $b^{\frac{q-1}2}=1$. Hence, by
		\[
			\left(\gamma\bar{\gamma}\right)^{\frac{q-1}{2}}
			= \left(\gamma^{q+1}\right) ^{\frac{q-1}{2}}
			= 1 \iff
			\gamma^{\frac{q^2-1}{2}} = 1,
		\]
		it follows that $\gamma\bar{\gamma}$ is a square in $GF(q)$ if and 
		only if $\gamma$ is a square in $GF(q^2)$, which proves~\ref{cor:itm:square_equivalence_1}.
		
\ref{cor:itm:square_equivalence_2} follows easily from the Facts~\ref{facts}.
\end{proof}
Summarizing, we have established that every circle $B^1_{(c_1,r_1)}\in \tau(B^2_{(\gamma,0)},B^2_{(\bar{\gamma},0)})$ 
has -- under the right circumstances -- four tangent circles in $\tau(B^2_{(\gamma,0)},B^2_{(\bar{\gamma},0)})$.

We will now show that a proper Steiner chain (in accordance with 
Definition~\ref{def:steiner-chain}) can only be constructed in the 
case of Lemma~\ref{lem:second_circle_1} or~\ref{lem:second_circle_2}. 
If $c_1 = \bar{c}_1$ and $c_2 = -\bar{c}_2$ (or vice versa), the 
contact point of $B^1_{(c_1,r_1)}$ and $B^1_{(c_2,r_2)}$ lies on one 
of the carrier circles, which is a violation of Definition~\ref{def:steiner-chain}\ref{def:itm:contact-points}.

To see this, we consult Lemma~\ref{prop:inters_circles_first_type}, where it follows that 
$B^1_{(c_1,r_1)}$ touches $B^2_{(\gamma,0)}$ at
\[
	\zeta^{(1)}_{\gamma}
		= \frac{c_1\bar{\gamma} - \bar{c}_1\gamma}{2\bar{\gamma}}
		= c_1 \frac{\bar{\gamma} - \gamma}{2\bar{\gamma}}
\]
and $B^2_{(\bar{\gamma},0)}$ at
\[
	\zeta^{(1)}_{\bar{\gamma}}
		= \frac{c_1\gamma - \bar{c}_1\bar{\gamma}}{2\gamma}
		= c_1 \frac{\gamma - \bar{\gamma}}{2\gamma}.
\]
Recall that for $B^1_{(c_2,r_2)}$ as given in Lemma~\ref{lem:second_circle_3} we have
\begin{equation}\label{eq:c_case_second_circle_3}
	c_2 = -\bar{c}_2 = \pm c_1 \frac{\gamma - \bar{\gamma}}{\gamma + \bar{\gamma}}.
\end{equation}
Consequently, $B^1_{(c_2,r_2)}$ has the point
\[
	\zeta^{(2)}_{\gamma}
		= \frac{c_2\bar{\gamma} - \bar{c}_2\gamma}{2\bar{\gamma}}
		= c_2 \frac{\bar{\gamma} + \gamma}{2\bar{\gamma}}
		= \pm c_1 \frac{\gamma - \bar{\gamma}}{\gamma + \bar{\gamma}}
			\cdot \frac{\bar{\gamma} + \gamma}{2\bar{\gamma}}
		= \pm c_1 \frac{\gamma - \bar{\gamma}}{2\bar{\gamma}}
\]
in common with $B^2_{(\gamma,0)}$, whereas it shares the point
\[
	\zeta^{(2)}_{\bar{\gamma}}
		= \frac{c_2\gamma - \bar{c}_2\bar{\gamma}}{2\gamma}
		= c_2 \frac{\gamma + \bar{\gamma}}{2\gamma}
		= \pm c_1 \frac{\gamma - \bar{\gamma}}{\gamma + \bar{\gamma}}
			\cdot \frac{\gamma + \bar{\gamma}}{2\gamma}
		= \pm c_1 \frac{\gamma - \bar{\gamma}}{2\gamma}
\]
with $B^2_{(\bar{\gamma},0)}$.

Depending on the sign we choose in~\eqref{eq:c_case_second_circle_3}, we 
find that either $\zeta^{(2)}_{\gamma}$ corresponds to $\zeta^{(1)}_{\gamma}$, 
or $\zeta^{(2)}_{\bar{\gamma}}$ to $\zeta^{(1)}_{\bar{\gamma}}$. 
In either case, we find a point that is contact point of three tangent circles.

Similarly, it is easy to verify that if both $c_1$ and $c_2$ are in 
$z-\bar{z} = 0$ (Lemma~\ref{lem:second_circle_1}) or in $z+\bar{z} = 0$ 
(Lemma~\ref{lem:second_circle_2}), there are no points shared by more than two tangent circles.

To summarize, we can conclude that if $\tau(B^2_{(\gamma,0)},B^2_{(\bar{\gamma},0)})$ 
is non-empty, any circle in $\tau(B^2_{(\gamma,0)},B^2_{(\bar{\gamma},0)})$ 
has exactly two tangent circles which would potentially allow the construction 
of a Steiner chain. In other words, if we can find a Steiner chain starting 
from a given circle, the chain is unique.

According to our earlier reflections, we have to consider two separate cases. 
We start with the case where $B^1_{(c_1,r_1)}$ and $B^1_{(c_2,r_2)}$ are given as in Lemma~\ref{lem:second_circle_1}.

\subsubsection{Case $c_1 = \bar{c}_1$ and $c_2 = \bar{c}_2$}

Let us assume that $\gamma\bar{\gamma}$ is a square in $GF(q)$. We have seen (Corollary~\ref{cor:gamma_square}) that this is equivalent to $\gamma$ being a square in $GF(q^2)$. Moreover, we mentioned earlier that $\gamma$ is a square if and only if $\bar{\gamma}$ is a square.
Therefore, we can write equation~\eqref{lem:eq:second_circle_1} from Lemma~\ref{lem:second_circle_1} as
\begin{equation}\label{eq:local1}
	c_2 = c_1\cdot \frac{2\sqrt{\gamma}\sqrt{\bar{\gamma}}
		\pm (\gamma + \bar{\gamma})}{
		2\sqrt{\gamma}\sqrt{\bar{\gamma}}
		\mp (\gamma + \bar{\gamma})}.
\end{equation}
Define
\[
	u_1 := \sqrt{\gamma} + \sqrt{\bar{\gamma}},
	\qquad
	u_2 := \sqrt{\gamma} - \sqrt{\bar{\gamma}},
\]
and
\[
	u := - \left(\frac{u_1}{u_2}\right)^2.
\]
Then the two possibilities in~\eqref{eq:local1} correspond to
\[
	c_2 = u\cdot c_1
	\quad \text{and} \quad
	c_2 = \frac{1}{u}\cdot c_1.
\]
As we saw in earlier calculations, $u$ is in  $GF(q)\setminus\{0\}$. 
Let $k$ be the multiplicative order of $u$ in $GF(q)\setminus\{0\}$, 
i.e.\ $u^k = 1$ but $u^l\ne 1$ for $1<l<k$. We need to note a few 
observations regarding the multiplicative order $\ord(u)$ of $u$:

\begin{rem}
	\begin{itemize}
		\item
		The multiplicative order of $u$ in $GF(q)\setminus\{0\}$ is the same as 
		its multiplicative order in $GF(q^2)\setminus\{0\}$, or in any other 
		extension field for that matter. Thus, we will henceforth not 
		specify which cyclic group we refer to if we talk about the multiplicative order of $u$.
		
		\item
		$\ord(u) = \ord(\frac{1}{u})$.
		
		\item
		$\ord(u) > 1$, or, in other words, $u\ne 1$. This follows with equation~\eqref{eq:local1} from the fact that $\gamma^2\ne \bar{\gamma}^2$.
		
		\item
		$\ord(u) \mid q-1$, since the order of any element divides the order of the group.
	\end{itemize}
\end{rem}

Apparently, if $\ord(u) = k$ and $c_1$ is any element of $GF(q)\setminus\{0\}$, the chain of circles
\[
	B^1_{(c_1, r_1)} \to
	B^1_{(u c_1, r_2)} \to
	B^1_{(u^2 c_1, r_3)} \to \cdots \to
	B^1_{(u^k c_1, r_{k+1})} = B^1_{(c_1, r_1)}
\]
with
\[
	r_i := (u^{i-1}c_1)^2
		\frac{(\gamma + \bar{\gamma})^2}{4\gamma\bar{\gamma}}
\]
defined as in Lemma~\ref{lem:condition_symmetric_carrier_circles}, is a 
Steiner chain of length $k$. In fact, we can build such a chain 
starting with any element $c_1$ of $GF(q)\setminus\{0\}$. Consequently, 
if $\gamma$ is a square in $GF(q^2)$, there are 
$\frac{q-1}{k}$ Steiner chains, and each chain has length $k$.

Since the length of the Steiner chains depends on the multiplicative order of $u$, we have a closer look at $u$. If we write $\frac{u_1}{u_2}$ as
\[
	\frac{u_1}{u_2} = \frac{\sqrt{\gamma} + \sqrt{\bar{\gamma}}}{
		\sqrt{\gamma} - \sqrt{\bar{\gamma}}}
	= \frac{(\sqrt{\gamma} + \sqrt{\bar{\gamma}})^2}{
		(\sqrt{\gamma} - \sqrt{\bar{\gamma}})
		(\sqrt{\gamma} + \sqrt{\bar{\gamma}})
	}
	= \frac{\gamma + \bar{\gamma} + 2\sqrt{\gamma{\bar{\gamma}}}}{
		\gamma - \bar{\gamma}},
\]
it is easy to see that $\bar{\frac{u_1}{u_2}} = -\frac{u_1}{u_2}$, i.e.\ $\left(\frac{u_1}{u_2}\right)^2$ is a nonsquare in $GF(q)$. We know that $u = -1\cdot \left(\frac{u_1}{u_2}\right)^2$, and hence we
have to distinguish between two cases:
\begin{itemize}
	\item
	If $-1$ is a square in $GF(q)$, then $u$ is a nonsquare in $GF(q)$. In this case, 
	the multiplicative order of $u$ is a divisor of $q-1$, but does not divide $\frac{q-1}{2}$.
	\item
	If $-1$ is a nonsquare in $GF(q)$, $u$ is a square in $GF(q)$, and  
	the multiplicative order of $u$ divides $\frac{q-1}{2}$.
	
	Notice that if $-1$ is a nonsquare in $GF(q)$, $m$ is odd and $p\equiv 3\mod 4$. If we write $\bar{p} = \bar{3}\in \ZZ_4$ and $m = 2d + 1$, it follows that
	\[
		\bar{p}^m = (\bar{3}^2)^d \cdot \bar{3}
		= \bar{1}^d \cdot \bar{3} = \bar{3}.
	\]
	Consequently, $\frac{q-1}{2}$ is not divisible by $2$, and therefore, the length of the Steiner chain is odd.
\end{itemize}

\subsubsection{Case $c_1 = -\bar{c}_1$ and $c_2 = -\bar{c}_2$}

We assume that $-\gamma\bar{\gamma}$ is a nonsquare in $GF(q)$ as required by Lemma~\ref{lem:second_circle_2}. Recall equation~\eqref{lem:eq:second_circle_2} in said Lemma:
\begin{equation}\label{eq:local2}
	c_2 = c_1\cdot \frac{
		2\sqrt{-\gamma\bar{\gamma}} \pm (\gamma - \bar{\gamma})}{
		2\sqrt{-\gamma\bar{\gamma}} \mp (\gamma - \bar{\gamma})}.
\end{equation}
Define
\[
	v_1 := \gamma + \sqrt{-1}\sqrt{\gamma\bar{\gamma}},
	\qquad
	v_2 := \sqrt{-1}\gamma + \sqrt{\gamma\bar{\gamma}},
\]
and
\[
	v := \left(\frac{v_1}{v_2}\right)^2.
\]
The reader may verify that the two possibilities in~\eqref{eq:local2} correspond to
\[
	c_2 = v\cdot c_1
	\quad \text{and} \quad
	c_2 = \frac{1}{v}\cdot c_1.
\]
With equation~\eqref{eq:local2} it is easy to see that $v\in GF(q)\setminus\{0\}$ and $v\ne 1$. We denote by $k'$ the multiplicative order of $v$ in $GF(q)\setminus\{0\}$ and let $c_1$ be any of the $q-1$ elements in $B^{2}_{(1,0)}\setminus \{0,\infty\}$. A Steiner chain of length $k'$ is then given by
\[
	B^1_{(c_1, r_1)} \to
	B^1_{(v c_1, r_2)} \to
	B^1_{(v^2 c_1, r_3)} \to \cdots \to
	B^1_{(v^{k'} c_1, r_{k'+1})} = B^1_{(c_1, r_1)}
\]
wih $r_i$ determined by Lemma~\ref{lem:condition_symmetric_carrier_circles}:
\[
	r_i := (v^{i-1}c_1)^2
		\frac{(\gamma - \bar{\gamma})^2}{4\gamma\bar{\gamma}}.
\]
We can construct such a chain for any element $c_1\ne 0$ in $z+\bar{z}=0$, which means that there are $\frac{q-1}{k'}$ possible Steiner chains.

The length of the Steiner chains depends on the multiplicative order of $v$. Let us therefore have a closer look at $v$. We notice that a square root of $v$ is given by
\begin{align}\label{eq:local3}
	\sqrt{v} &= \frac{v_1}{v_2}
	= \frac{(v_1)^2}{v_1v_2}
	= \frac{(\gamma + \sqrt{-1}\sqrt{\gamma\bar{\gamma}})^2}{
		\sqrt{-1}(\gamma^2 + \gamma\bar{\gamma})}
	= \frac{2\sqrt{-1}\sqrt{\gamma\bar{\gamma}} + \gamma -\bar{\gamma}}{
		\sqrt{-1}(\gamma + \bar{\gamma})} \nonumber\\
	&= \frac{2\sqrt{\gamma\bar{\gamma}} + \sqrt{-1}(\bar{\gamma} - \gamma)}{
		\gamma + \bar{\gamma}}.
\end{align}
By assumption, $-\gamma\bar{\gamma}$ is a nonsquare in $GF(q)$, which means that exactly one of $-1$ and $\gamma\bar{\gamma}$ is a square in $GF(q)$, see Corollary~\ref{cor:gamma_square}. By~\eqref{eq:local3}, we can say that if $-1$ is a square in $GF(q)$, then $\bar{\sqrt{v}} = -\sqrt{v}$, and otherwise, $\bar{\sqrt{v}} = \sqrt{v}$.

Accordingly, there are two cases (see also Corollary~\ref{cor:gamma_square}):
\begin{itemize}
	\item
	If $-1$ is a square in $GF(q)$ and $\gamma$ a nonsquare in $GF(q^2)$, then $v$ is a nonsquare in $GF(q)$. In this case, the multiplicative order of $v$ is a divisor of $q-1$, but does not divide $\frac{q-1}{2}$.
	\item
	If $-1$ is a nonsquare in $GF(q)$ and $\gamma$ a square in $GF(q^2)$, then $v$ is a square in $GF(q)$, and the multiplicative order of $v$ divides $\frac{q-1}{2}$. By above reasoning, the length of any Steiner chain in this case is always odd.
\end{itemize}

\subsubsection{Overview}
Let us summarize what we have shown so far. Remember that $-1$ is a nonsquare in $GF(q)$ 
if and only if $q\equiv 3 \mod 4$.

\begin{thm}\label{thm:4cases}
	Let $B^2_{(\gamma,0)}$ and $B^2_{(\bar{\gamma},0)}$ be two different 
	circles of the second type (i.e.\ $\gamma^2\ne \bar{\gamma}^2$). Define
	\[
		u := \frac{
			2\sqrt{\gamma\bar{\gamma}} + (\gamma + \bar{\gamma})}{
			2\sqrt{\gamma\bar{\gamma}} - (\gamma + \bar{\gamma})}
		\quad \text{and} \quad
		v := \frac{
			2\sqrt{-\gamma\bar{\gamma}} + (\gamma - \bar{\gamma})}{
			2\sqrt{-\gamma\bar{\gamma}} - (\gamma - \bar{\gamma})},
	\]
	and let $k$ and $k'$ be the multiplicative orders of $u$ and $v$, respectively.
	\begin{enumerate}
		\item If $-1$ is a nonsquare in $GF(q)$ and\label{i}
		\begin{enumerate}
			\item
				$\gamma$ is a square in $GF(q^2)$, there are $\frac{q-1}{k}$ Steiner chains of length $k$ and $\frac{q-1}{k'}$ Steiner chains of length $k'$.\label{ia}
			\item
				$\gamma$ is a nonsquare in $GF(q^2)$, there are no Steiner chains.\label{ib}
		\end{enumerate}
		
		\item If $-1$ is a square in $GF(q)$ and
		\begin{enumerate}
			\item\label{thm:itm:ss}
				$\gamma$ is a square in $GF(q^2)$, there are $\frac{q-1}{k}$ Steiner chains of length $k$ each.
			\item\label{thm:itm:ns}
				$\gamma$ is a nonsquare in $GF(q^2)$, there are $\frac{q-1}{k'}$ Steiner chains of length $k'$.
		\end{enumerate}
	\end{enumerate}
	
	In~\ref{ia} the length of every Steiner chain is odd and a divisor of $\frac{q-1}{2}$.
	In~\ref{thm:itm:ss} and~\ref{thm:itm:ns} the length of the Steiner chains does not divide $\frac{q-1}{2}$.
\end{thm}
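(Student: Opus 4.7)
The theorem is essentially a synthesis of the case analysis carried out in the three preceding subsections, so my plan is to assemble the pieces cleanly rather than re-prove anything from scratch.

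First, I recall the dichotomy established after Lemma \ref{lem:second_circle_3}: any circle in $\tau(B^2_{(\gamma,0)},B^2_{(\bar\gamma,0)})$ has center either on $z=\bar z$ or on $z=-\bar z$, and the ``mixed'' tangency case of Lemma \ref{lem:second_circle_3} is excluded by Definition \ref{def:steiner-chain}\ref{def:itm:contact-points}. Hence any proper Steiner chain is entirely of ``type 1'' (all centers on $z=\bar z$, handled by Lemma \ref{lem:second_circle_1}) or entirely of ``type 2'' (all centers on $z=-\bar z$, handled by Lemma \ref{lem:second_circle_2}).

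Next, for type-1 chains, Lemma \ref{lem:second_circle_1} shows that such chains exist if and only if $\gamma\bar\gamma$ is a square in $GF(q)$, and that the successor relation $c_i\mapsto c_{i+1}$ is multiplication by either $u$ or $u^{-1}$ in $GF(q)\setminus\{0\}$. Since $u^k=1$ with $k=\ord(u)$ and $k\mid q-1$, orbits of the subgroup $\langle u\rangle$ partition $GF(q)\setminus\{0\}$ into $(q-1)/k$ cosets, each yielding one Steiner chain of length $k$; together with Corollary \ref{cor:gamma_square}\ref{cor:itm:square_equivalence_1}, which rewrites the existence condition as ``$\gamma$ is a square in $GF(q^2)$'', this accounts for the $u$-term in the theorem. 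Symmetrically, Lemma \ref{lem:second_circle_2} together with Corollary \ref{cor:gamma_square}\ref{cor:itm:square_equivalence_2} gives the existence condition for type-2 chains and produces $(q-1)/k'$ chains of length $k'$ each. The four cases of the theorem then follow by intersecting these two existence conditions with the square/nonsquare status of $-1$ and of $\gamma$.

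Finally, for the divisibility statement I invoke the square/nonsquare analysis of $u$ and $v$ already carried out in the two subsections. In case \ref{ia} we are in the regime where $-1$ is a nonsquare in $GF(q)$, which forces $u$ and $v$ both to be squares in $GF(q)$, so $\ord(u)$ and $\ord(v)$ divide $(q-1)/2$; writing $q=p^m$ with $p\equiv 3\bmod 4$ and $m$ odd shows $(q-1)/2$ is odd, hence the chain length is odd. In cases \ref{thm:itm:ss} and \ref{thm:itm:ns}, the parity of $-1$ flips the conclusion: $u$ (respectively $v$) becomes a nonsquare in $GF(q)$, so its order cannot divide $(q-1)/2$. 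The main thing to check carefully is that I have correctly matched up the four pairs (square/nonsquare of $-1$) $\times$ (square/nonsquare of $\gamma$) to the conclusions of Lemmas \ref{lem:second_circle_1} and \ref{lem:second_circle_2} via Corollary \ref{cor:gamma_square}; this is the only nontrivial bookkeeping, and it is the step where an oversight would most easily occur.
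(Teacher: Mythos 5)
Your proposal is correct and takes essentially the same route as the paper, which itself presents Theorem~\ref{thm:4cases} as a summary of Lemmas~\ref{lem:second_circle_1}--\ref{lem:second_circle_3}, the exclusion of the mixed case via Definition~\ref{def:steiner-chain}\ref{def:itm:contact-points}, Corollary~\ref{cor:gamma_square}, and the square/nonsquare analysis of $u$ and $v$. The bookkeeping you flag as the delicate step is carried out exactly as in the paper, and your case matching agrees with it.
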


Notice that if $-1$ is a square in $GF(q)$, Steiner chains always exist, and exactly $q-1$ circles are part of a Steiner chain. If $-1$ is a nonsquare in $GF(q)$ and $\gamma$ a square, then there are $2(q-1)$ circles used in Steiner chains.

\subsection{The general case}
\label{sec:inters_circles_general}
Let $C_1\neq C_2$ be two arbitrary circles 
with two intersection points $z_1$ and $z_2$.
A M\"obius transformation $T$ which 
maps $z_1$ to $0$ and $z_2$ to $\infty$, maps
$C_1$ and $C_2$ to two circles of the second type
$B^2_{(\gamma_1,0)}$ and $B^2_{(\gamma_2,0)}$. Since
$C_1$ and $C_2$ are different, we have 
$\gamma_1\bar{\gamma}_2 - \bar{\gamma}_1\gamma_2\ne 0$. 
And $C_1$ and $C_2$ carry a Steiner chain if and only if 
$B^2_{(\gamma_1,0)}$ and $B^2_{(\gamma_2,0)}$ carry a Steiner chain.

We observed that $\gamma_1\gamma_2$ must be a square in $GF(q^2)$ 
in order for a Steiner chain to exist, and we showed that this is 
the case if and only if $\frac{\bar{\gamma}_2}{\bar{\gamma}_1}$ is a square. 
Hence, if this condition is satisfied, we were able to map the circles 
$B^2_{(\gamma_1,0)}$ and $B^2_{(\gamma_2,0)}$ to $B^2_{(\gamma,0)}$ 
and $B^2_{(\bar{\gamma},0)}$, where
\[
	\gamma := \sqrt{\frac{\bar{\gamma}_2}{\bar{\gamma}_1}},
\]

and the condition $\gamma_1\bar{\gamma}_2 \ne \bar{\gamma}_1\gamma_2$ changes to $\gamma^2\ne \bar{\gamma}^2$.

But what does this mean for two arbitrary intersecting circles? What is the necessary condition for two arbitrary intersecting circles $C_1,C_2$ to carry a Steiner chain?
This is where the capacitance comes in (see Section~\ref{sec-preliminaries}). 
Recall that the capacitance of $B^2_{(\gamma_1,0)}$ and $B^2_{(\gamma_2,0)}$ is defined as
\[
	\capac = \frac{1}{\gamma_1\bar{\gamma}_1\gamma_2\bar{\gamma}_2}(
	\gamma_1\bar{\gamma}_2 + \bar{\gamma}_1\gamma_2)^2
	= \frac{\bar{\gamma}_2}{\bar{\gamma}_1}
	\cdot \frac{\gamma_1}{\gamma_2} + 2
	+ \frac{\bar{\gamma}_1}{\bar{\gamma}_2}
	\cdot \frac{\gamma_2}{\gamma_1}.
\]
The capacitance of any pair of circles that can be mapped to 
$B^2_{(\gamma_1,0)}$ and $B^2_{(\gamma_2,0)}$ via a M\"obius transformation 
has the same value. Hence, if instead of giving a condition for 
$\frac{\bar{\gamma}_2}{\bar{\gamma}_1}$ we can state a condition for $\capac$, we will be able to decide for two arbitrary intersecting circles whether they may possibly carry a Steiner chain or not by looking at their capacitance. This is the motivation behind the following Lemma.

\begin{lem}\label{lem:capac_first_cond}
	$\frac{\bar{\gamma}_2}{\bar{\gamma}_1}$ is a square in $GF(q^2)$ if and only if either
	\begin{itemize}
		\item
		$\capac = 0$ and $-1$ is a nonsquare in $GF(q)$, or
		\item
		$\capac \ne 0$ is a square in $GF(q)$.
	\end{itemize}
\end{lem}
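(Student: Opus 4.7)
The plan is to set $w := \bar\gamma_2/\bar\gamma_1$ and $s := w/\bar w = w^{\,1-q}$, translate both ``$w$ is a square in $GF(q^2)$'' and the value of $\capac$ into conditions on $s$, and then connect them via a short Frobenius-style computation.

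First I substitute into the formula for $\capac$ displayed just before the lemma: since $\bar w = \gamma_2/\gamma_1$, the two cross-ratio terms are $s$ and $1/s$, giving
\[
\capac = s + 2 + \frac{1}{s} = \frac{(s+1)^2}{s},
\]
where $s$ lies in the kernel of the norm, i.e.\ $s\bar s = 1$, equivalently $s^q = 1/s$. The key identity is then
\[
s^{(q+1)/2} = w^{(1-q)(q+1)/2} = w^{-(q^2-1)/2},
\]
so by the Euler criterion $w$ is a square in $GF(q^2)$ if and only if $s^{(q+1)/2} = 1$.

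It remains to read off this condition from $\capac$. In the case $\capac = 0$ one has $(s+1)^2 = 0$, hence $s = -1$, so $s^{(q+1)/2} = (-1)^{(q+1)/2}$ equals $1$ exactly when $(q+1)/2$ is even, i.e.\ $q \equiv 3 \pmod 4$, i.e.\ $-1$ is a nonsquare in $GF(q)$. In the case $\capac \ne 0$ I use $s^q = 1/s$ to compute $(s+1)^q = 1/s + 1 = (s+1)/s$, whence $(s+1)^{q-1} = 1/s$, and therefore
\[
\capac^{(q-1)/2} = \frac{(s+1)^{q-1}}{s^{(q-1)/2}} = \frac{1}{s^{(q+1)/2}}.
\]
Thus $\capac$ is a nonzero square in $GF(q)$ precisely when $s^{(q+1)/2} = 1$, i.e.\ precisely when $w$ is a square in $GF(q^2)$. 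Combining the two cases yields the stated equivalence.

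The main obstacle is really just spotting the substitution $s = w/\bar w$: once this bridge is built, both sides of the equivalence collapse to the single condition $s^{(q+1)/2} = 1$, and the remaining steps are mechanical manipulations using $s^q = 1/s$ and the Euler criterion applied first in $GF(q^2)$ and then in $GF(q)$.
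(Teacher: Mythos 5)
Your proof is correct, and it reaches the conclusion by a genuinely different mechanism than the paper. The paper also starts from $\capac=\frac{(g+\bar g)^2}{g\bar g}$ with $g=\bar\gamma_2/\bar\gamma_1$, but it then argues through the \emph{norm}: $\capac\ne0$ is a square in $GF(q)$ iff $g\bar g$ is a square in $GF(q)$, which by Corollary~\ref{cor:gamma_square} is equivalent to $g$ being a square in $GF(q^2)$; the case $\capac=0$ is handled by observing $\bar g=-g$, so $g\bar g=-g^2$ has square root $\sqrt{-1}\,g$, and reducing again to that corollary. You instead pass to the norm-one element $s=g/\bar g=g^{1-q}$, note $\capac=(s+1)^2/s$, and collapse everything onto the single test $s^{(q+1)/2}=g^{-(q^2-1)/2}\overset{?}{=}1$, verified by raw Euler-criterion exponent arithmetic together with the Frobenius identity $(s+1)^q=s^q+1$. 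The two routes are of comparable length; yours is more self-contained (it bypasses Corollary~\ref{cor:gamma_square} entirely and treats both cases by one uniform criterion), while the paper's stays closer to the square/nonsquare bookkeeping it reuses elsewhere. All the small points in your argument check out: $s\bar s=1$ so $s^q=1/s$; $\capac=0$ forces $s=-1$ and $(-1)^{(q+1)/2}=1$ exactly when $q\equiv3\bmod4$; and for $\capac\ne0$ one has $s\ne-1$, so the division by $s+1$ is legitimate and $\capac^{(q-1)/2}=s^{-(q+1)/2}=\pm1$ decides squareness in $GF(q)$ since $\capac=\mathrm{Tr}(s)+2\in GF(q)$.
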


\begin{proof}
	We substitute $\frac{\bar{\gamma}_2}{\bar{\gamma}_1}$ by $g$ and write $\capac$ as
	\begin{equation}\label{proof:eq:local7}
		\capac = \frac{g}{\bar{g}} + 2 + \frac{\bar{g}}{g}
		= \frac{g^2 + 2g\bar{g} + \bar{g}^2}{g\bar{g}}
		= \frac{(g+\bar{g})^2}{g\bar{g}}.
	\end{equation}
	Since $\capac$ is in $GF(q)$, its square root in $GF(q^2)$ always exists. In particular, it is clear from~\eqref{proof:eq:local7} that if $\capac\ne0$, its square root is in $GF(q)$ if and only if $g\bar{g}$ is a square in $GF(q)$. Having a look at Corollary~\ref{cor:gamma_square}, it is evident that this is equivalent 
	to $g=\frac{\bar{\gamma}_2}{\bar{\gamma}_1}$ being a square in $GF(q^2)$.
	
	On the other hand, if $\capac = 0$, we have $\bar{g} = -g$, which is equivalent to
	\[
		g\bar{g} = -g^2
	\]
	(recall that $g\ne 0$ because of the restriction $\gamma_1\bar{\gamma}_2 - \bar{\gamma}_1\gamma_2 \ne 0$). It follows that a square root of $g\bar{g}$ is given by $\sqrt{-1}g$, and therefore $g\bar{g}$ is a square in $GF(q)$ if and only if $\sqrt{-1}g\in GF(q)$. Since $g = -\bar{g}$, this is the same as requiring that $\sqrt{-1}$ is a nonsquare in 
$GF(q)$ . With Corollary~\ref{cor:gamma_square} we conclude that $g$ is a square in $GF(q^2)$ if and only if $-1$ is a nonsquare in $GF(q)$.
\end{proof}

From now on, let us assume that $\frac{\bar{\gamma}_2}{\bar{\gamma}_1}$ is a square in $GF(q^2)$. In this case we can write
\[
	\capac = \frac{\gamma^2}{\bar{\gamma}^2} + 2 + \frac{\bar{\gamma}^2}{\gamma^2}
	= \left(
		\frac{\gamma}{\bar{\gamma}} + \frac{\bar{\gamma}}{\gamma}
	\right)^2
\]
with $\gamma=\sqrt{\frac{\bar{\gamma}_2}{\bar{\gamma}_1}}$ a square root of $\frac{\bar{\gamma}_2}{\bar{\gamma}_1}$. Notice that $\capac$ (and also the square root of $\capac$) does not depend on which square root of $\frac{\bar{\gamma}_2}{\bar{\gamma}_1}$ we assign to $\gamma$.

At this point Theorem~\ref{thm:4cases} comes into play: We saw that the existence and length of a Steiner chain depends directly on whether $\gamma$ is a square in $GF(q^2)$ or not. Remember that our goal is to prove or disprove the existence of Steiner chains on the basis of the capacitance. To investigate this, we need to find a correlation between $\capac$ and $\gamma$ being a square or a nonsquare.
We consider two separate cases (compare with Lemma~\ref{lem:capac_first_cond}):
\begin{enumerate}
	\item
		$\capac=0$ and $-1$ is a nonsquare in $GF(q)$ (Lemma~\ref{lem:capac_second_cond_zero}), and
	\item
		$\capac\ne 0$ is a square in $GF(q)$ (Lemma~\ref{lem:capac_second_cond}).
\end{enumerate}

But before we have a look at how $\capac$ and $\gamma$ are connected, we need another Lemma, which will be essential for the proof of Lemma~\ref{lem:capac_second_cond_zero}.

\begin{lem}\label{lem:always_square}
	Assume that $-1$ is a nonsquare in $GF(q)$. Then $\sqrt{\gamma\bar{\gamma}}$ is a square in $GF(q^2)$.
\end{lem}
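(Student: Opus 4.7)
The plan is to show that under the hypothesis $q \equiv 3 \bmod 4$, the norm $\gamma\bar\gamma = \gamma^{q+1}$ is actually a perfect fourth power in $GF(q^2)$, which immediately forces its square roots to be squares.

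First I would rewrite $\gamma\bar\gamma$ using the conjugation $\bar\gamma = \gamma^q$, so that $\gamma\bar\gamma = \gamma^{q+1}$. The hypothesis that $-1$ is a nonsquare in $GF(q)$ is equivalent, by Facts~\ref{facts}, to $q \equiv 3 \bmod 4$; in particular $q+1 \equiv 0 \bmod 4$. Writing $q+1 = 4m$, this gives
\[
  \gamma\bar\gamma = \gamma^{4m} = (\gamma^m)^4 = \bigl((\gamma^m)^2\bigr)^2.
\]
Therefore one square root of $\gamma\bar\gamma$ in $GF(q^2)$ is $(\gamma^m)^2$, which is visibly a square in $GF(q^2)$.

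To finish, I would observe that the other square root is $-(\gamma^m)^2$, and by Facts~\ref{facts} every element of $GF(q)$ is a square in $GF(q^2)$, so $-1$ is a square in $GF(q^2)$. Hence both square roots $\pm(\gamma^m)^2$ are squares in $GF(q^2)$, and the statement $\sqrt{\gamma\bar\gamma}$ is a square (regardless of the choice of square root) follows.

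There is no serious obstacle here: the entire proof is a counting argument on exponents of a primitive element of $GF(q^2)^\times$. The only subtlety is making sure the conclusion is insensitive to the ambiguity in $\sqrt{\gamma\bar\gamma}$, which is handled by the last remark that $-1$ itself is a square in the extension.
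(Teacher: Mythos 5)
Your proof is correct and follows essentially the same route as the paper: both arguments hinge on $\gamma\bar{\gamma}=\gamma^{q+1}$ together with $4\mid q+1$ (from $q\equiv 3\bmod 4$), the paper verifying the square criterion $(\gamma\bar{\gamma})^{\frac{q^2-1}{4}}=1$ while you equivalently exhibit the explicit fourth root $\gamma^{(q+1)/4}$. Your closing remark resolving the $\pm$ ambiguity of the square root via the fact that $-1\in GF(q)$ is a square in $GF(q^2)$ is a nice touch that the paper leaves implicit.
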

\begin{proof}
	We need to show that
	\[
		\left(\gamma\bar{\gamma}\right)^\frac{q^2-1}{4} = 1
	\]
	(notice that $q^2-1 = (q-1)(q+1)$ is always divisible by $4$). We write the left-hand side as
	\[
		\left(\gamma\bar{\gamma}\right)^\frac{q^2-1}{4}
		=\gamma^{(q+1)\cdot\frac{q^2-1}{4}}
		=\gamma^{\frac{q+1}{4} \cdot (q^2-1)}.
	\]
	Since $-1$ is a nonsquare in $GF(q)$, it follows that $q\equiv 3\mod 4$. 
	This means that $q+1$ is divisible by $4$, and hence $\gamma^{\frac{q+1}{4}}$ exists. 
	Moreover, since $GF(q^2)\setminus\{0\}$ is a cyclic group of order $q^2-1$, 
	any element raised to the power $q^2-1$ is equal to~$1$. Consequently,
	\[
		\gamma^{\frac{q+1}{4} \cdot (q^2-1)}
		=\left(\gamma^\frac{q+1}{4}\right)^{q^2-1}=1,
	\]
	which concludes the proof.
\end{proof}

As we go on, it will be helpful to refer back to Theorem~\ref{thm:4cases} from time to time. Also, the reader may want to have a look at Theorem~\ref{thm:final} and Table~\ref{tab:overview_inters_circles_general} already now to see what we are aiming at.

\begin{lem}\label{lem:capac_second_cond_zero}
	If $\capac = 0$ and $-1$ is a nonsquare in $GF(q)$, then $\gamma$ is a square in $GF(q^2)$ if and only if $p\equiv 7\mod 16$.
\end{lem}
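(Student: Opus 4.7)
The natural first move is to apply Corollary~\ref{cor:gamma_square}, whose first part reduces the question ``$\gamma$ is a square in $GF(q^2)$'' to the simpler one ``the norm $\gamma\bar\gamma \in GF(q)$ is a square in $GF(q)$''. So the whole problem becomes determining the quadratic residuacity of $\gamma\bar\gamma$ under the hypothesis $\kappa = 0$.

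Realize $GF(q^2) = GF(q)(\alpha)$ with $\alpha^2 = \delta$ a fixed nonsquare in $GF(q)$, so that conjugation sends $\alpha \mapsto -\alpha$. The capacitance identity $\kappa = (g+\bar g)^2/(g\bar g)$ with $g := \gamma^2 = \bar\gamma_2/\bar\gamma_1$ shows $\kappa = 0$ forces $g + \bar g = 0$, hence $g = b\alpha$ for some $b \in GF(q) \setminus \{0\}$. Writing $\gamma = x + y\alpha$ with $x,y \in GF(q)$, the identity $\gamma^2 = b\alpha$ yields the two scalar equations $x^2 + y^2\delta = 0$ and $2xy = b$. Substituting the first into $\gamma\bar\gamma = x^2 - y^2\delta$ produces the clean formula
\[
\gamma\bar\gamma \;=\; -2\,y^2\,\delta.
\]
Since $y^2$ is always a square, and since $-1$ and $\delta$ are both nonsquares (so $(-1)\cdot\delta$ is a square), the product $-2\,y^2\,\delta$ is a square in $GF(q)$ if and only if $2$ is a square in $GF(q)$. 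Note that this verdict is independent of $b$, i.e.\ independent of the choice of the carrier circles, as one should expect.

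It remains to translate ``$2$ is a square in $GF(q)$'' into an arithmetic condition on $p$. The hypothesis that $-1$ is nonsquare in $GF(q)$ forces $p \equiv 3 \bmod 4$ and $m$ odd; for odd $m$ a square root of $2$ lies in $GF(p^m)$ iff it already lies in $GF(p)$, so the criterion becomes ``$2$ is a QR modulo $p$''. By the second supplement to quadratic reciprocity this is $p \equiv \pm 1 \bmod 8$, and intersecting with $p \equiv 3 \bmod 4$ isolates $p \equiv 7 \bmod 8$. The only real hurdle is securing the clean norm identity $\gamma\bar\gamma = -2y^2\delta$ from the defining equations; after that the squareness bookkeeping and the number-theoretic finish are both routine. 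I should flag that this route lands at $p \equiv 7 \bmod 8$, which is weaker than the stated $p \equiv 7 \bmod 16$, so I would watch the author's argument closely for a further restriction I have not spotted.
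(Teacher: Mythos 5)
Your argument is correct, and it takes a genuinely different route from the paper's. The paper rewrites $\capac=0$ as $\gamma^2=\sqrt{-1}\cdot\gamma\bar{\gamma}$, factors $\gamma=\sqrt{\sqrt{-1}}\cdot\sqrt{\gamma\bar{\gamma}}$, uses the auxiliary Lemma~\ref{lem:always_square} to dispose of the factor $\sqrt{\gamma\bar{\gamma}}$, and reduces the question to whether the primitive eighth root of unity $\sqrt{\sqrt{-1}}$ is a square in $GF(q^2)$, i.e.\ to whether $16\mid q^2-1$. You instead reduce via Corollary~\ref{cor:gamma_square}~\ref{cor:itm:square_equivalence_1} to the residuacity of the norm $\gamma\bar{\gamma}$ in $GF(q)$, compute that norm in coordinates as $-2y^2\delta$, and arrive at the criterion ``$2$ is a square in $GF(p)$''. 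This bypasses Lemma~\ref{lem:always_square} entirely and makes the appearance of $\sqrt{2}$ (compare $w^\pm=3+2\sqrt{2}$ in Theorem~\ref{thm:final}) transparent. The two criteria agree: for $q\equiv 3\bmod 4$ one has $q-1\equiv 2\bmod 4$, so $16\mid q^2-1$ iff $8\mid q+1$ iff $q\equiv 7\bmod 8$ iff (for $m$ odd) $p\equiv 7\bmod 8$, which is exactly the second supplement to quadratic reciprocity intersected with $p\equiv 3\bmod 4$.

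Your closing caveat is resolved in your favour: the correct condition is $p\equiv 7\bmod 8$, and the lemma as stated (together with case~\ref{thm:enum:local1} of Theorem~\ref{thm:final} and Table~\ref{tab:overview_inters_circles_general}, which inherit it) is too restrictive. The slip in the paper's proof is the final mod-$16$ case check: it asserts $\overline{15}^{\,2m}=\bar{9}$, but $15\equiv -1\bmod 16$, so $15^{2m}\equiv 1\bmod 16$, and hence $p\equiv 15\bmod 16$ also yields $q^2\equiv 1\bmod 16$. A concrete witness is $p=q=31$: there $\gamma^{q-1}=-\sqrt{-1}$, so $\gamma^{(q^2-1)/2}=(-\sqrt{-1})^{(q+1)/2}=(-\sqrt{-1})^{16}=1$ and $\gamma$ is a square in $GF(31^2)$, even though $31\equiv 15\bmod 16$; equivalently, $2$ is a quadratic residue modulo $31$. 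So your ``if and only if'' $p\equiv 7\bmod 8$ is the correct statement, and your proof of it is sound.
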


\begin{proof}
	The condition $\capac=0$ is equivalent to
	\[
		\frac{\gamma}{\bar{\gamma}} + \frac{\bar{\gamma}}{\gamma} = 0
		\iff \gamma^2 + \bar{\gamma}^2 = 0
		\iff \gamma = \pm \sqrt{-1} \bar{\gamma}.
	\]
	Be aware that $\gamma\notin GF(q)$, and in particular $\gamma\ne 0$, a consequence of the afore-mentioned property $\gamma^2\ne \bar{\gamma}^2$.
	Multiplying both sides of the equation by $\gamma$ leads to
	\[
		\gamma^2 = \sqrt{-1}\cdot \gamma\bar{\gamma},
	\]
	where we omit the $\pm$-sign by using $\sqrt{-1}$ to represent both square roots of $-1$. If we write $\sqrt{-1}$ as $\sqrt{-1}=\frac{\gamma^2}{\gamma\bar{\gamma}}$, it is obvious that a square root of $\sqrt{-1}$ exists. We can therefore write
	\begin{equation}\label{proof:eq:local8}
		\gamma = \sqrt{\sqrt{-1}}\cdot \sqrt{\gamma\bar{\gamma}}.
	\end{equation}
	Again, we omit the $\pm$-sign, as it is irrelevant for our considerations which square root we take.

	Because of Lemma~\ref{lem:always_square} we know that the square root 
	of $\sqrt{\gamma\bar{\gamma}}$ exists. It is now obvious 
	from~\eqref{proof:eq:local8} that $\gamma$ is a square if and only if $\sqrt{\sqrt{-1}}$ is 
	a square. This is the case if and only if the multiplicative order of $\sqrt{\sqrt{-1}}$ is a 
	divisor of $\frac{q^2-1}{2}$. Since $-1$ has multiplicative order $2$, it follows 
	that the multiplicative order of $\sqrt{\sqrt{-1}}$ is $8$. This implies that $\gamma$ is 
	a square in $GF(q^2)$ if and only if $q^2-1$ is divisible by $16$, i.e.\ if and only if $q^2\equiv 1\mod 16$.
	
	What does this mean for $p$ and $m$? Recall that by assumption, $m$ is odd and $p\equiv 3\mod 4$. 
	For $\bar{p}\in \ZZ_{16}$ there are thus four possibilities: $\bar{p} = \bar{3},\ \bar{p} = \bar{7},\ \bar{p} = \overline{11}$, or $\bar{p} = \bar{15}$.
	
	Let us write $m=2d+1$. If $p\equiv 3\mod 16$, we see that
	\[
		\bar{p}^{2m} = \bar{3}^{2(2d+1)} = \bar{9}^{2d+1} = \bar{1}^d\cdot \bar{9} = \bar{9}.
	\]
	Similarly, one checks that $\overline{11}^{2m} = \bar{9}$ and $\overline{15}^{2m} = \bar{9}$. The only case where 
	$q^2\equiv 1\mod 16$ is for $\bar{p} = \bar{7}$:
	\[
		\bar{7}^{2(2d+1)} = \big(\bar{7}^2\big)^{2d+1} = \bar{1}^{2d+1} = \bar{1}.
	\]
\end{proof}
Recall that
$\capac = \left(
	\frac{\gamma}{\bar{\gamma}} + \frac{\bar{\gamma}}{\gamma}
\right)^2$.
In the following Lemma, we define $\sqrt{\capac}$ to be equal to $\frac{\gamma}{\bar{\gamma}} + \frac{\bar{\gamma}}{\gamma}$. Be aware that this is an arbitrary definition. If in general we calculate the square root of the capacitance of two given circles, it is not clear from the outset whether the square root we take corresponds to $\frac{\gamma}{\bar{\gamma}} + \frac{\bar{\gamma}}{\gamma}$ or to $-\frac{\gamma}{\bar{\gamma}} - \frac{\bar{\gamma}}{\gamma}$.

\begin{lem}\label{lem:capac_second_cond}
	Assume that $\capac\ne 0$ is a square in $GF(q)$ with square root $\sqrt{\capac} = \frac{\gamma}{\bar{\gamma}} + \frac{\bar{\gamma}}{\gamma}$. Then:
	\begin{enumerate}
		\item\label{lem:enum:capacitance_equivalence1}
		If $-1$ is a nonsquare in $GF(q)$, the following are equivalent:
		
		$\gamma$ is a square in $GF(q^2) \iff \sqrt{\capac}+2$ is a square in $GF(q) \iff -\sqrt{\capac} + 2$ is a square in $GF(q)$.

		\item \label{lem:enum:capacitance_equivalence2}
		If $-1$ is a square in $GF(q)$, the following are equivalent:
			
		$\gamma$ is a square in $GF(q^2) \iff \sqrt{\capac}+2$ is a square in 
		$GF(q) \iff -\sqrt{\capac}+2$ is a nonsquare in $GF(q)$.
	\end{enumerate}
\end{lem}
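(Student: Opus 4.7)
The plan rests on two algebraic identities, obtained by clearing denominators in
$\sqrt{\capac} = \gamma/\bar\gamma + \bar\gamma/\gamma$:
\[
\sqrt{\capac} + 2 = \frac{(\gamma + \bar\gamma)^2}{\gamma\bar\gamma}, \qquad
-\sqrt{\capac} + 2 = -\frac{(\gamma - \bar\gamma)^2}{\gamma\bar\gamma}.
\]
Once these are in hand, the entire argument reduces to classifying the numerators as squares or nonsquares in $GF(q)$ and invoking Corollary~\ref{cor:gamma_square}. Note that $\gamma^2 \neq \bar\gamma^2$ forces $\gamma \neq 0$ and both $\gamma \pm \bar\gamma \neq 0$, so the right-hand sides make sense.

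I would first treat $\sqrt{\capac}+2$. Since $\gamma + \bar\gamma \in GF(q)\setminus\{0\}$, the numerator $(\gamma+\bar\gamma)^2$ is a nonzero square in $GF(q)$, and by the multiplicative rules in Facts~\ref{facts} the left-hand side is a square in $GF(q)$ if and only if $\gamma\bar\gamma$ is. By Corollary~\ref{cor:gamma_square}\ref{cor:itm:square_equivalence_1} this is equivalent to $\gamma$ being a square in $GF(q^2)$. This yields the first equivalence in both \ref{lem:enum:capacitance_equivalence1} and \ref{lem:enum:capacitance_equivalence2}, independently of whether $-1$ is a square in $GF(q)$.

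For $-\sqrt{\capac}+2$, the crucial observation is that $(\gamma - \bar\gamma)^2$ is a \emph{nonsquare} in $GF(q)$: its only two square roots are $\pm(\gamma - \bar\gamma)$, and since the Galois conjugate of $\gamma - \bar\gamma$ is $-(\gamma - \bar\gamma) \neq \gamma - \bar\gamma$, neither square root lies in $GF(q)$. By the multiplicative rules, $-\sqrt{\capac}+2$ is then a square in $GF(q)$ if and only if $-\gamma\bar\gamma$ is a nonsquare in $GF(q)$. Applying Corollary~\ref{cor:gamma_square}\ref{cor:itm:square_equivalence_2}, I would conclude as follows: in case \ref{lem:enum:capacitance_equivalence1}, with $-1$ a nonsquare in $GF(q)$, the condition ``$-\gamma\bar\gamma$ nonsquare'' is equivalent to ``$\gamma$ a square in $GF(q^2)$'', which gives the stated chain; in case \ref{lem:enum:capacitance_equivalence2}, with $-1$ a square in $GF(q)$, it is instead equivalent to ``$\gamma$ a nonsquare in $GF(q^2)$'', so one reads that $-\sqrt{\capac}+2$ is a \emph{nonsquare} in $GF(q)$ iff $\gamma$ is a square in $GF(q^2)$.

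The main obstacle is the correct classification of $(\gamma\pm\bar\gamma)^2$ as a square and a nonsquare in $GF(q)$, together with accurate bookkeeping of the sign in the second identity and of the splitting governed by Corollary~\ref{cor:gamma_square}\ref{cor:itm:square_equivalence_2}; beyond that the proof is essentially a direct application of Facts~\ref{facts} and Corollary~\ref{cor:gamma_square}.
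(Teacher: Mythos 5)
Your proposal is correct and follows essentially the same route as the paper: the same two identities $\sqrt{\capac}+2=\frac{(\gamma+\bar\gamma)^2}{\gamma\bar\gamma}$ and $-\sqrt{\capac}+2=\frac{(\gamma-\bar\gamma)^2}{-\gamma\bar\gamma}$, the same square/nonsquare classification of the numerators, and the same appeal to Corollary~\ref{cor:gamma_square}. The only difference is that you spell out explicitly why $(\gamma-\bar\gamma)^2$ is a nonsquare in $GF(q)$, which the paper leaves implicit.
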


\begin{proof}
	We treat the two cases $\sqrt{\capac} + 2$ and $-\sqrt{\capac} + 2$ separately:
	\begin{itemize}
		\item
		For $\sqrt{\capac} + 2$ we have
		\[
			\sqrt{\capac} + 2
			= \frac{\gamma}{\bar{\gamma}} +2 + \frac{\bar{\gamma}}{\gamma}
			= \frac{\gamma^2 + 2\gamma\bar{\gamma} + \bar{\gamma}^2}{
				\gamma\bar{\gamma}}
			= \frac{(\gamma + \bar{\gamma})^2}{\gamma\bar{\gamma}}.
		\]
		Note that $\gamma + \bar{\gamma}\ne 0$ as $\gamma^2\ne \bar{\gamma}^2$.
		Obviously, $\sqrt{\capac} + 2$ is a square in $GF(q)$ if and only if $\gamma\bar{\gamma}$ is a square in $GF(q)$, which is the case if and only if $\gamma$ is a square in $GF(q^2)$ -- see Corollary~\ref{cor:gamma_square}.
			
		\item
		Conversely, for $-\sqrt{\capac} + 2$ we can write
		\[
			- \sqrt{\capac} + 2
			= - \frac{\gamma}{\bar{\gamma}} +2 - \frac{\bar{\gamma}}{\gamma}
			= \frac{\gamma^2 - 2\gamma\bar{\gamma} + \bar{\gamma}^2}{
				-\gamma\bar{\gamma}}
			= \frac{(\gamma - \bar{\gamma})^2}{-\gamma\bar{\gamma}}.
		\]
		Note that $\gamma - \bar{\gamma}\ne 0$ as $\gamma^2\ne \bar{\gamma}^2$.
		Here, $-\sqrt{\capac} + 2$ is a square in $GF(q)$ if and only if $-\gamma\bar{\gamma}$ 
		is a nonsquare in $GF(q)$. Again, the desired result follows with Corollary~\ref{cor:gamma_square}.
	\end{itemize}
\end{proof}

\begin{rem}
	If $\sqrt{\capac}$ is an arbitrary square root of $\capac$ and $-1$ a nonsquare in $GF(q)$, then $\sqrt{\capac} + 2$ is a square in $GF(q)$ if and only if $\gamma$ is a square in $GF(q^2)$ (case~\ref{lem:enum:capacitance_equivalence1} of Lemma~\ref{lem:capac_second_cond}). On the other hand, if $-1$ is a square in $GF(q)$ (case~\ref{lem:enum:capacitance_equivalence2} of Lemma~\ref{lem:capac_second_cond}), exactly one of $\sqrt{\capac}+2$ and $-\sqrt{\capac}+2$ is a square in $GF(q)$. A Steiner chain in this case always exists: we are either in case~\ref{thm:itm:ss} or in case~\ref{thm:itm:ns} of Theorem~\ref{thm:4cases}.
\end{rem}

We are now well on the way to proving our main theorem.
What we still lack is a condition for the length of the Steiner chains in case they exist. For this, let us recall the definitions of $u$ and $v$ in Theorem~\ref{thm:4cases}:
\[
	u := \frac{
		2\sqrt{\gamma\bar{\gamma}} + (\gamma + \bar{\gamma})}{
		2\sqrt{\gamma\bar{\gamma}} - (\gamma + \bar{\gamma})},
	\qquad
	v := \frac{
		2\sqrt{-\gamma\bar{\gamma}} + (\gamma - \bar{\gamma})}{
		2\sqrt{-\gamma\bar{\gamma}} - (\gamma - \bar{\gamma})}.
\]
We write $u$ and $v$ as
\[
	u = \frac{
		2 + \frac{\gamma + \bar{\gamma}}{\sqrt{\gamma\bar{\gamma}}}}{
		2 - \frac{\gamma + \bar{\gamma}}{\sqrt{\gamma\bar{\gamma}}}}
	\quad\text{ and }\quad
	v = \frac{
		2 + \frac{\gamma - \bar{\gamma}}{\sqrt{-\gamma\bar{\gamma}}}}{
		2 - \frac{\gamma - \bar{\gamma})}{\sqrt{-\gamma\bar{\gamma}}}}.
\]
Notice that
\[
	\left(
		\frac{\gamma \pm \bar{\gamma}}{\sqrt{\pm\gamma\bar{\gamma}}}
	\right)^2
	= \pm\frac{\gamma}{\bar{\gamma}} + 2 \pm \frac{\bar{\gamma}}{\gamma}
	= \pm \sqrt{\capac} + 2.
\]
Apparently, $u$ and $v$ (or $\frac{1}{u}$ and $\frac{1}{v}$, depending on which square root of $\pm\sqrt{\capac}+2$ we take) correspond to
\[
	w^{\pm} := \frac{2 + \sqrt{\pm \sqrt{\capac} + 2}}{
		2 - \sqrt{\pm \sqrt{\capac} + 2}}.
\]
In particular, if $\capac = 0$, we have
\[
	w^{\pm} = \frac{2 + \sqrt{2}}{2 - \sqrt{2}} = \frac{(2 + \sqrt{2})^2}{2} = 3+2\sqrt{2}.
\]

Our results from Section~\ref{sec:inters_circles_general} combined with Theorem~\ref{thm:4cases} 
are summarized in the following

\begin{thm}\label{thm:final}
	Let $C_1$ and $C_2$ be two intersecting circles in $\miqplane(q)$, $q=p^m$, for $p$ an odd prime. Let
	\[
		\capac := \capacfunc(C_1,C_2)
	\]
	be the associated capacitance as defined in Definition~\ref{def:capacitance}, 
	and $\sqrt{\capac}$ any square root of $\capac$. If $\sqrt{\capac}\in GF(q)$, we additionally define
	\[
		w^{\pm} := \frac{2 + \sqrt{\pm \sqrt{\capac} + 2}}{
			2 - \sqrt{\pm \sqrt{\capac} + 2}}.
	\]
	Then, the circles $C_1$ and $C_2$ carry a Steiner chain if and only if one of the 
	following three conditions is satisfied:
	\begin{enumerate}
		\item\label{thm:enum:local1}
			$\capac = 0$, $m$ is odd, and $p\equiv 7\mod 16$.
			
			In this case there are $2\frac{q-1}{k}$ Steiner chains, whose length $k$ is given by the multiplicative order of $3+2\sqrt{2}$.
		
		\item\label{thm:enum:local2}
			$\capac\ne 0$, $\sqrt{\capac}\in GF(q)$, $-1$ is a nonsquare in $GF(q)$, and 
			$\sqrt{\capac}+2$ is a square in $GF(q)$.
			
			There are $\frac{q-1}{k^+}$ Steiner chains of length $k^+$ and $\frac{q-1}{k^-}$ Steiner chains of length $k^-$, where $k^+$ and $k^-$ are the multiplicative orders of $w^+$ and $w^-$, respectively.
			
		\item\label{thm:enum:local3}
			$\capac\ne 0$, $\sqrt{\capac}\in GF(q)$, and $-1$ is a square in $GF(q)$.
			
			There are $\frac{q-1}{k}$ Steiner chains of length $k$ each, 
			where $k$ is the multiplicative order of $w^+$ if $\sqrt{\capac}+2$ is a square in $GF(q)$, and the multiplicative order of $w^-$, otherwise.
	\end{enumerate}
	In~\ref{thm:enum:local1} and~\ref{thm:enum:local2}, the length of the chains is odd and a divisor of $\frac{q-1}{2}$, whereas the length of the chains in case~\ref{thm:enum:local3} does not divide $\frac{q-1}{2}$.
\end{thm}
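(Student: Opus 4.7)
The plan is to synthesize the results already accumulated, since the hard analytical work is done in Theorem~\ref{thm:4cases} and Lemmas~\ref{lem:capac_first_cond}--\ref{lem:capac_second_cond}. First I would fix $C_1, C_2$ distinct and intersecting, with intersection points $z_1, z_2$, and choose a M\"obius transformation $T$ sending $z_1 \mapsto 0$ and $z_2 \mapsto \infty$. Then $T(C_1) = B^2_{(\gamma_1,0)}$ and $T(C_2) = B^2_{(\gamma_2,0)}$ for some $\gamma_1, \gamma_2$ with $\gamma_1\bar\gamma_2 - \bar\gamma_1 \gamma_2 \ne 0$. Since M\"obius transformations map Steiner chains to Steiner chains and preserve the capacitance, it suffices to analyze $B^2_{(\gamma_1,0)}$ and $B^2_{(\gamma_2,0)}$, which is precisely what was done in Section~\ref{sec:inters_circles_general}.

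Next I would split into the cases according to Lemma~\ref{lem:capac_first_cond}: if $\kappa \ne 0$ is a nonsquare in $GF(q)$, or if $\kappa = 0$ with $-1$ a square in $GF(q)$, then $\bar\gamma_2/\bar\gamma_1$ is not a square in $GF(q^2)$, so by Lemma~\ref{lem:tangent_circles_for_intersecting_carrier_circles} there are no common tangent circles and no Steiner chain exists. In the remaining situations one can reduce further to the symmetric pair $B^2_{(\gamma,0)}, B^2_{(\bar\gamma,0)}$ with $\gamma = \sqrt{\bar\gamma_2/\bar\gamma_1}$, and the existence, count, and length of chains is governed entirely by Theorem~\ref{thm:4cases}, depending on (a) whether $-1$ is a square in $GF(q)$ and (b) whether $\gamma$ is a square in $GF(q^2)$.

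The core task of the proof is then to translate condition (b) on $\gamma$ into a condition on $\kappa$. For this I would invoke Lemma~\ref{lem:capac_second_cond_zero} in the case $\kappa = 0$ (with $-1$ a nonsquare) to obtain that $\gamma$ is a square iff $p \equiv 7 \mod 16$; since both $\sqrt{\kappa}+2 = 2$ and $-\sqrt{\kappa}+2 = 2$ collapse to the same value, both $u$ and $v$ yield chains and the common multiplier becomes $w^+ = w^- = 3 + 2\sqrt{2}$, giving $2(q-1)/k$ chains as stated. In the case $\kappa \ne 0$ with $\sqrt{\kappa} \in GF(q)$, Lemma~\ref{lem:capac_second_cond} translates ``$\gamma$ is a square in $GF(q^2)$'' into an explicit square/nonsquare condition on $\sqrt{\kappa} \pm 2$, distinguishing the two subcases of Theorem~\ref{thm:4cases}. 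Identifying $u$ with $w^+$ and $v$ with $w^-$, as computed in the paragraph immediately preceding the theorem via $\bigl((\gamma \pm \bar\gamma)/\sqrt{\pm \gamma\bar\gamma}\bigr)^2 = \pm\sqrt{\kappa} + 2$, gives the stated chain counts and lengths in each of~\ref{thm:enum:local2} and~\ref{thm:enum:local3}.

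The final parity/divisibility claim follows directly from the last sentence of Theorem~\ref{thm:4cases}: in cases~\ref{thm:enum:local1} and~\ref{thm:enum:local2} we are in subcase~\ref{ia} of Theorem~\ref{thm:4cases} (since $-1$ is a nonsquare), hence $u, v$ are squares in $GF(q)\setminus\{0\}$ and their orders divide $(q-1)/2$ and are odd; in case~\ref{thm:enum:local3} we are in subcase~\ref{thm:itm:ss} or~\ref{thm:itm:ns}, where $u$ (resp.\ $v$) is a nonsquare and its order does not divide $(q-1)/2$. The main obstacle I anticipate is bookkeeping: one must carefully handle the sign ambiguity in $\sqrt{\kappa}$ (which is why case~\ref{thm:enum:local3} is stated using ``whichever of $\pm\sqrt{\kappa}+2$ is a square''), and one must verify that the two multipliers $w^+, w^-$ of Theorem~\ref{thm:final} are indeed the same as the $u, v$ of Theorem~\ref{thm:4cases} up to the choice of square root, so that the lengths match. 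Once these identifications are secured, the statement follows by a direct case analysis.
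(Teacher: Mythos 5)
Your proposal is correct and follows essentially the same route as the paper, which itself offers no separate proof of Theorem~\ref{thm:final} beyond exactly this synthesis: reduction to the standard pair, Lemma~\ref{lem:capac_first_cond} for existence of common tangents, Lemmas~\ref{lem:capac_second_cond_zero} and~\ref{lem:capac_second_cond} to translate the square condition on $\gamma$ into one on $\capac$, the identification of $u,v$ with $w^{\pm}$, and Theorem~\ref{thm:4cases} for the counts, lengths, and parity claims.
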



\def\arraystretch{1.5}
\begin{table}[!h]
	\centering
	\caption{Overview of Steiner chains for intersecting carrier circles in $\miqplane(q)$}
	\label{tab:overview_inters_circles_general}
	\small
	\begin{tabular}{| p{1.6cm} | p{3.6cm} | p{3.6cm} | p{3.6cm} |}
		\hline
		{\bfseries Case}
			& \multicolumn{2}{c|}{
				$q\equiv 3\mod 4$}
			& $q\equiv 1\mod 4$
			\\ \hline
		{\bfseries Condition}
			& $\capac = 0$ and $p\equiv 7\mod 16$.
			& $\capac\ne 0$ is a square in $GF(q)$ and $\sqrt{\capac}+2$ is a square in $GF(q)$.
			& $\capac\ne 0$ is a square in $GF(q)$.
			\\ \hline
		{\bfseries Result}
			& There are $2\frac{q-1}{k}$ chains of length $k$.
			& There are $\frac{q-1}{k^+}$ chains of length $k^+$ and $\frac{q-1}{k^-}$ chains of length $k^-$.
			& There are $\frac{q-1}{k}$ chains of length $k$.
			\\ \hline
		{\bfseries Comment}
			& $k$ is the multiplicative order of $3+2\sqrt{2}$.
			& $k^+$ is the multiplicative order of $w^+$ and $k^-$ is the multiplicative order of $w^-$.
			& $k$ is the multiplicative order of $w^{\pm}$, where the sign is chosen such that $\pm\sqrt{\capac}+2$ is a square in $GF(q)$.
			\\ \hline
		\hspace{0pt}{\bfseries Specifics}
			& \multicolumn{2}{p{6.4cm}|}{
				The length of the chains is odd and divides $\frac{q-1}{2}$.}
			& The length of the chains divides $q-1$ but does not divide $\frac{q-1}{2}$.
			\\ \hline
		\end{tabular}
\end{table}

\begin{exa}
If $\miqplane(31)$ is constructed over the pair of finite 
fields $GF(31)$ and $GF(31)(\alpha)$ with $\alpha=\sqrt{-1}$, one 
can verify by Lemma~\ref{prop:inters_circles_first_type} 
that the circles $B^1_{(3\alpha+8,14)}$ and $B^2_{(5\alpha+12,17)}$ are intersecting, and 
we compute that their capacitance $\capac$ equals $2$.

A square root of $\capac$ is given by $\sqrt{\capac} = 8$. Moreover, we can determine the following square roots:
\[
	\sqrt{\sqrt{\capac} + 2} = 14
	\quad\text{and}\quad
	\sqrt{-\sqrt{\capac} + 2} = 5.
\]
Obviously, all the requirements for the existence of a Steiner 
chain as stated in Theorem~\ref{thm:final}~\ref{thm:enum:local2} are satisfied.
To determine the length of the Steiner chains, we have a look at $w^{\pm}$:
\[
	w^+ = \frac{2+14}{2-14} = \frac{16}{19}=9
	\qquad
	w^- = \frac{2+5}{2-5} = \frac{7}{28}=8.
\]

The multiplicative orders of $w^+ = 9$ and $w^- = 8$ are
15 and 5, respectively.
Accordingly, $B^1_{(3\alpha+8,14)}$ and $B^2_{(5\alpha+12,17)}$ carry $2$ Steiner chains of length $15$ and $6$ Steiner chains of length $5$. This can be confirmed by an exhaustive search
of circles, implemented in {\scriptsize\sagelogo}. Explicit code can be found in~\cite{villiger}.
\end{exa}

\bibliographystyle{plain}

\begin{thebibliography}{1}

\bibitem{MR3193739}
Owen~D. Byer and Deirdre~L. Smeltzer.
\newblock A 3-{D} analog of {S}teiner's {P}orism.
\newblock {\em Math. Mag.}, 87(2):95--99, 2014.

\bibitem{MR0389515}
Julian~Lowell Coolidge.
\newblock {\em A treatise on the circle and the sphere}.
\newblock Chelsea Publishing Co., Bronx, N.Y., 1971.
\newblock Reprint of the 1916 edition.

\bibitem{MR990644}
H.~S.~M. Coxeter.
\newblock {\em Introduction to geometry}.
\newblock Wiley Classics Library. John Wiley \& Sons, Inc., New York, 1989.
\newblock Reprint of the 1969 edition.

\bibitem{MR1434062}
Peter Dembowski.
\newblock {\em Finite geometries}.
\newblock Classics in Mathematics. Springer-Verlag, Berlin, 1997.
\newblock Reprint of the 1968 original.

\bibitem{nhkk}
Norbert Hungerb\"{u}hler and Katharina Kusejko.
\newblock Steiner's porism in finite {M}iquelian {M}\"{o}bius planes.
\newblock {\em Adv. Geom.}, 18(1):55--68, 2018.

\bibitem{lidl:1986:introduction}
Rudolf Lidl and Harald Niederreiter.
\newblock {\em Introduction to finite fields and their applications}, pages
  1--59.
\newblock Cambridge University Press, 1986.

\bibitem{MR1017034}
Dan Pedoe.
\newblock {\em Geometry}.
\newblock Dover Books on Advanced Mathematics. Dover Publications, Inc., New
  York, 2 edition, 1988.
\newblock A comprehensive course.

\bibitem{villiger}
Gideon Villiger.
\newblock {A variation of Steiner's Porism in Miquelian M\"obius Planes of odd
  order. Master thesis}.
\newblock Institute of Mathematics, University of Z\"urich, 2018.

\bibitem{MR2877262}
Paul Yiu.
\newblock {Rational Steiner porism}.
\newblock {\em Forum Geom.}, 11:237--249, 2011.

\end{thebibliography}

\end{document}